 \newtheorem{thm}{Theorem}[section]
 \newtheorem{cor}[thm]{Corollary}
 \newtheorem{lem}[thm]{Lemma}
 \newtheorem{prop}[thm]{Proposition}
 \theoremstyle{definition}
 \theoremstyle{remark}
 \numberwithin{equation}{section}
\begin{document}

\title[Semisimple Hopf algebras]
{Semisimple Hopf algebras of dimension $2q^3$}

\author[J. Dong]{Jingcheng Dong}

\author[L. Dai]{Li Dai}

\address{College of Engineering, Nanjing Agricultural University, Nanjing
210031, Jiangsu, People's Republic of China} \email[J.
Dong]{dongjc@njau.edu.cn}\email[L. Dai]{daili1980@njau.edu.cn}

\subjclass[2000]{16W30}

\keywords{semisimple Hopf algebra, semisolvability, Radford's
biproduct, character, Drinfeld double}

\begin{abstract}
Let $q$ be a prime number, $k$ an algebraically closed field of
characteristic $0$, and $H$ a non-trivial semisimple Hopf algebra of
dimension $2q^3$. This paper proves that $H$ can be constructed
either from group algebras and their duals by means of extensions,
or from Radford's biproduct $H\cong R\#kG$, where $kG$ is the group
algebra of $G$ of order $2$, $R$ is a semisimple Yetter-Drinfeld
Hopf algebra in ${}^{kG}_{kG}\mathcal{YD}$ of dimension $q^3$.
\end{abstract}
\maketitle



\section{Introduction}\label{sec1}
In the last twenty years, various classification results were
obtained for finite-dimensional semisimple Hopf algebras over an
algebraically closed field of characteristic $0$. Up to now,
semisimple Hopf algebras of dimension $p,p^2,p^3,pq,pq^2$ and $pqr$
have been completely classified, where $p,q,r$ are distinct prime
numbers. See \cite{Etingof,Etingof2,Masuoka1,Masuoka2,Zhu} for
details.

In the present paper, we shall continue the investigation on the
classification of semisimple Hopf algebras. The main purpose of this
paper is to investigate semisimple Hopf algebras of dimension
$2q^3$, where $q$ is a prime number. Of course, some other
interesting results are also obtained in this paper.

The paper is organized as follows. In Section \ref{sec2}, we recall
the definitions and basic properties of semisolvability, characters,
Radford's biproducts and Drinfeld double, respectively. Some useful
lemmas are also contained in this section. In Section \ref{sec3}, we
study the structure of non-trivial semisimple Hopf algebras of
dimension $pq^3$, where $p,q$ are prime numbers with $p^2<q$. In
Section \ref{sec4}, we study the structure of non-trivial semisimple
Hopf algebras of dimension $2q^3$, where $q$ is a prime number.

Throughout this paper, all modules and comodules are left modules
and left comodules, and moreover they are finite-dimensional over an
algebraically closed field $k$ of characteristic $0$. $\otimes$,
${\rm dim}$ mean $\otimes _k$, ${\rm dim}_k$, respectively. For two
positive integers $m$ and $n$, $gcd(m,n)$ denotes the greatest
common divisor of $m,n$. Our references for the theory of Hopf
algebras are \cite{Montgomery2} or \cite{Sweedler}. The notation for
Hopf algebras is standard. For example, the group of group-like
elements in $H$ is denoted by $G(H)$.

\section{Preliminaries}\label{sec2}

\subsection{Characters}Throughout this subsection, $H$ will be a semisimple Hopf
algebra over $k$.

Let $V$ be an $H$-module. The character of $V$ is the element
$\chi=\chi_V\in H^*$ defined by $\langle\chi,h\rangle={\rm Tr}_V(h)$
for all $h\in H$. The degree of $\chi$ is defined to be the integer
${\rm deg}\chi=\chi(1)={\rm dim}V$. We shall use $X_t$ to denote the
set of all irreducible characters of $H$ of degree $t$. All
irreducible characters of $H$ span a subalgebra $R(H)$ of $H^*$,
which is called the character algebra of $H$. The antipode $S$
induces an anti-algebra involution $*: R(H)\to R(H)$, given by
$\chi\mapsto\chi^*:=S(\chi)$. The character of the trivial
$H$-module is the counit $\varepsilon$.

Let $\chi_U,\chi_V\in R(H)$ be the characters of the $H$-modules $U$
and $V$, respectively. The integer $m(\chi_U,\chi_V)={\rm
dimHom}_H(U,V)$ is defined to be the multiplicity of $U$ in $V$. Let
$\widehat{H}$ denote the set of irreducible characters of $H$. Then
$\widehat{H}$ is a basis of $R(H)$. If $\chi\in R(H)$, we may write
$\chi=\sum_{\alpha\in \widehat{H}}m(\alpha,\chi)\alpha$.

For any group-like element $g$ in $G(H^*)$, $m(g,\chi\chi^{*})>0$ if
and only if $m(g,\chi\chi^{*})= 1$ if and only if $g\chi=\chi$. The
set of such group-like elements forms a subgroup of $G(H^*)$, of
order at most $({\rm deg}\chi)^2$.  See \cite[Theorem 10]{Nichols}.
Denote this subgroup by $G[\chi]$. In particular, we have
$$\chi\chi^*=\sum_{g\in G[\chi]}g+\sum_{\alpha\in \widehat{H},{\rm
deg}\alpha>1}m(\alpha,\chi\chi^*)\alpha.\eqno(2.1)$$

A subalgebra $A$ of $R(H)$ is called a standard subalgebra if $A$ is
spanned by irreducible characters of $H$. Let $X$ be a subset of
$\widehat{H}$. Then $X$ spans a standard subalgebra of $R(H)$ if and
only if the product of characters in $X$ decomposes as a sum of
characters in $X$. There is a bijection between $*$-invariant
standard subalgebras of $R(H)$ and quotient Hopf algebras of $H$.
See \cite[Theorem 6]{Nichols}.

$H$ is said to be of type $(d_1,n_1;\cdots;d_s,n_s)$ as an algebra
if $d_1=1,d_2,\cdots,d_s$ are the dimensions of the simple
$H$-modules and  $n_i$ is the number of the non-isomorphic simple
$H$-modules of dimension $d_i$. That is, as an algebra,  $H$ is
isomorphic to a direct product of full matrix algebras $$H\cong
k^{(n_1)}\times \prod_{i=2}^{s}M_{d_i}(k)^{(n_i)}.$$

If $H^*$ is of type $(d_1,n_1;\cdots;d_s,n_s)$ as an algebra, then
$H$ is said to be of type $(d_1,n_1;\cdots;d_s,n_s)$ as a coalgebra.

\begin{lem}\label{lem1}
Let $\chi$ be an irreducible character of $H$. Then

 (1)\,The order of $G[\chi]$ divides $({\rm deg}\chi)^2$.

 (2)\,The order of $G(H^*)$ divides $n({\rm deg}\chi)^2$, where $n$ is the
 number of non-isomorphic irreducible characters of degree ${\rm deg}\chi$.
\end{lem}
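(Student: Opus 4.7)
The plan is to establish (1) first, using the decomposition of $\chi\chi^*$ recorded in (2.1) together with the Nichols--Richmond bound, and then to deduce (2) from (1) via an orbit--stabiliser argument applied to the natural action of $G(H^*)$ on the set of irreducible characters of a fixed degree.

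For (1), I would start from the explicit expression
$$\chi\chi^* \;=\; \sum_{g\in G[\chi]} g \;+\; \sum_{\alpha\in\widehat{H},\,\deg\alpha>1} m(\alpha,\chi\chi^*)\,\alpha$$
recorded in (2.1). The weak bound $|G[\chi]|\le(\deg\chi)^2$ has already been stated just above the lemma by invoking \cite[Theorem 10]{Nichols}. To upgrade this inequality to the divisibility $|G[\chi]|\mid(\deg\chi)^2$, I would attach to each $g\in G[\chi]$ an $H$-linear isomorphism $\phi_g\colon g\otimes V\to V$, unique up to scalar because $g\chi=\chi$ forces $g\otimes V\cong V$; observing that the $\phi_g$ compose up to a $2$-cocycle on $G[\chi]$ realises $V$ as a module over a twisted group algebra of $G[\chi]$, and a dimension comparison against $\dim V=\deg\chi$ then forces $|G[\chi]|$ to divide $(\deg\chi)^2$.

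For (2), I would let $G:=G(H^*)$ act on the set $X_{\deg\chi}$ by left multiplication $g\cdot\psi:=g\psi$. This action is well defined because tensoring with a one-dimensional module preserves both degree and irreducibility, and by construction the stabiliser of $\psi$ is precisely $G[\psi]$. Orbit--stabiliser then gives $|G|=|O_\psi|\cdot|G[\psi]|$ for each orbit $O_\psi$, so that
$$|O_\psi|\,(\deg\chi)^2 \;=\; |G|\cdot\frac{(\deg\chi)^2}{|G[\psi]|}$$
is an integer multiple of $|G|$ by part (1). Partitioning $X_{\deg\chi}$ into $G$-orbits $O_1,\dots,O_r$ and summing,
$$n(\deg\chi)^2 \;=\; \sum_{i=1}^{r} |O_i|\,(\deg\chi)^2$$
is a sum of integer multiples of $|G(H^*)|$, hence itself divisible by $|G(H^*)|$.

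The main obstacle is the divisibility refinement in (1): a naive degree count in (2.1) yields only the weaker estimate $|G[\chi]|\le(\deg\chi)^2$, and one has to exploit the extra module-theoretic structure on $V$ to extract divisibility. Once (1) is in hand, part (2) reduces to the clean orbit count above.
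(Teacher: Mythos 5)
The proposal breaks down at the key step of (1), and since your (2) is deduced from (1), the whole argument has a genuine gap. Your construction of the operators $T_g$ with $T_gT_h=\omega(g,h)T_{gh}$ is fine, but the concluding ``dimension comparison'' against $\dim V=\deg\chi$ cannot yield the divisibility $|G[\chi]|\mid(\deg\chi)^2$. For an abelian group $A=G[\chi]$, the twisted group algebra $k_\omega[A]$ is a direct sum of $|R|$ matrix algebras of equal size $e$, where $e^2=|A/R|$ and $R$ is the radical of the alternating bicharacter $\beta(g,h)=\omega(g,h)\omega(h,g)^{-1}$; so every $k_\omega[A]$-module has dimension divisible by $e$, and the count gives only $|A/R|\mid(\deg\chi)^2$, not $|A|\mid(\deg\chi)^2$. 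The worst case is when $G[\chi]$ is cyclic: then $H^2(G[\chi],k^{\times})=0$, the cocycle is a coboundary, $k_\omega[G[\chi]]\cong kG[\chi]$, and a module over a group algebra of a cyclic group can have any dimension whatsoever, so your method yields no information at all, although the statement is still nontrivial (already for $H=kS_3$ with $\chi$ the degree-$2$ character and $\lambda$ the sign character one has $G[\chi]$ of order $2$, and nothing about an abstract $2$-dimensional module over the cyclic group of order two forces the eigenspaces of $T_\lambda$ to be equidimensional). In the group case this cyclic situation is rescued by Clifford theory relative to $\ker\lambda$, i.e.\ by induction and restriction, machinery with no direct Hopf-algebra analogue. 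The paper's actual proof replaces it with the Nichols--Zoeller freeness theorem, and crucially applies it to a different object: the simple subcoalgebra $C\subseteq H^*$ containing $\chi$, of dimension $(\deg\chi)^2$. Since $g\chi=\chi$ is equivalent to $gC=C$, the coalgebra $C$ is a left $kG[\chi]$-module and a relative $(kG[\chi],H^*)$-Hopf module via $\Delta$, hence \emph{free} over $kG[\chi]$ by Nichols--Zoeller, so $|G[\chi]|$ divides $\dim C=(\deg\chi)^2$. The divisibility lives on the coalgebra of dimension $(\deg\chi)^2$, not on the module of dimension $\deg\chi$; that substitution is the idea your proposal is missing.

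Your part (2), taken on its own, is correct and is a legitimately different route from the paper's. The action of $G(H^*)$ on $X_{\deg\chi}$ is well defined, the stabiliser of $\psi$ under left multiplication is exactly $G[\psi]$ by the Nichols--Richmond facts quoted before the lemma, and orbit--stabiliser together with (1) gives $n(\deg\chi)^2=|G(H^*)|\sum_i(\deg\chi)^2/|G[\psi_i]|$ with integer summands, hence the divisibility. The paper (following Natale, Lemma 2.2.2) instead proves (2) directly, with no appeal to (1): the sum $C_t$ of all simple subcoalgebras of $H^*$ of dimension $t^2$, $t=\deg\chi$, is stable under left multiplication by all of $G(H^*)$, hence free over $kG(H^*)$ by the same Nichols--Zoeller argument, and $\dim C_t=nt^2$. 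So once (1) is repaired by the freeness argument, your orbit count for (2) stands as a valid alternative; as written, however, the proposal does not establish (1).
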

\begin{proof} It follows from Nichols-Zoeller Theorem \cite{Nichols2}. See
also \cite[Lemma 2.2.2]{Natale1}.
\end{proof}

\subsection{Semisolvability}\label{sec2-2}

Let $B$ be a finite-dimensional Hopf algebra over $k$. A Hopf
subalgebra $A\subseteq B$ is called normal if $h_1AS(h_2)\subseteq
A$ and $S(h_1)Ah_2\subseteq A$, for all $h\in B$. If $B$ does not
contain proper normal Hopf subalgebras then it is called simple. The
notion of simplicity is self-dual, that is, $B$ is simple if and
only if $B^*$ is simple.

Let $\pi:H\to B$ be a Hopf algebra map and consider the subspaces of
coinvariants
$$H^{co\pi}=\{h\in H|(id\otimes \pi)\Delta(h)=h\otimes 1\}, \mbox{and\,}$$
$$^{co\pi}\!H=\{h\in H|(\pi\otimes id)\Delta(h)=1\otimes h\}.$$
Then $H^{co\pi}$ (respectively, $^{co\pi}H$) is a left
(respectively, right) coideal subalgebra of $H$. Moreover, we have
$${\rm dim}H ={\rm dim}H^{co\pi}{\rm dim}\pi(H) ={\rm dim}{}^{co\pi}H{\rm dim}\pi(H).$$

The left coideal subalgebra $H^{co\pi}$ is stable under the left
adjoint action of $H$. Moreover $H^{co\pi} ={}^{co\pi}H$ if and only
if $H^{co\pi}$ is a (normal) Hopf subalgebra of $H$. If this is the
case, we shall say that the map $\pi:H\to B$ is normal. See
\cite{Schneider} for more details.

The following lemma comes from \cite[Section 1.3]{Natale4}.
\begin{lem}\label{lem2}
Let $\pi:H\to B$ be a Hopf epimorphism and $A$ a Hopf subalgebra of
$H$ such that $A\subseteq H^{co\pi}$. Then ${\rm dim}A$ divides
${\rm dim}H^{co\pi}$.
\end{lem}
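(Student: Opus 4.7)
The plan is to exhibit $H^{co\pi}$ as a sum of the same indecomposable projective right $A$-modules that make up $A$ itself, and then to extract divisibility from a Krull--Schmidt comparison together with the presence of the trivial module.

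First, by Nichols--Zoeller, $H$ is a free right $A$-module of rank $m=\dim H/\dim A$, so $H\cong A^{(m)}$ as right $A$-modules. Second, by Schneider's faithful flatness theorem (and, without normality of $\pi$, by Skryabin's freeness theorem for coideal subalgebras of a finite-dimensional Hopf algebra), $H$ is also a free right $H^{co\pi}$-module of rank $n=\dim\pi(H)$; this is in fact what underlies the formula ${\rm dim}H={\rm dim}H^{co\pi}{\rm dim}\pi(H)$ recalled just above the lemma. Because $A\subseteq H^{co\pi}$, this right $H^{co\pi}$-module decomposition of $H$ is automatically a decomposition as right $A$-modules, so $H\cong(H^{co\pi})^{(n)}$ as right $A$-modules. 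Combining the two presentations gives $(H^{co\pi})^{(n)}\cong A^{(m)}$ as right $A$-modules, which realizes $H^{co\pi}$ as a direct summand of a free right $A$-module; in particular $H^{co\pi}$ is projective.

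Now decompose the regular right $A$-module as $A=\bigoplus_i Q_i^{\dim T_i}$, where $Q_i$ is the projective cover of the simple right $A$-module $T_i$ (the standard Wedderburn-type decomposition, valid because $k$ is algebraically closed). Since $H^{co\pi}$ is projective, we may write $H^{co\pi}=\bigoplus_i Q_i^{b_i}$ with $b_i\in\mathbb{Z}_{\geq 0}$. Applying Krull--Schmidt to $(H^{co\pi})^{(n)}\cong A^{(m)}$ and matching the multiplicity of each $Q_i$ on both sides yields $n\,b_i=m\,\dim T_i$ for every $i$. Specializing to the trivial simple right $A$-module $T=k$ (with $A$ acting through its counit), one has $\dim T=1$, so the corresponding equation forces $\dim H^{co\pi}/\dim A=m/n$ to be a nonnegative integer, which is exactly the desired divisibility.

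The main obstacle is the second step: the freeness of $H$ over the coideal subalgebra $H^{co\pi}$ when $\pi$ is not \emph{a priori} normal. This is a nontrivial result of Schneider/Skryabin, but it is a standard tool in the finite-dimensional setting. Once it is granted, the Krull--Schmidt argument combined with the ever-present trivial module produces $\dim A\mid\dim H^{co\pi}$ with essentially no further computation.
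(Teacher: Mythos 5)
Your argument is correct, but it takes a genuinely different route from the paper's. The paper's proof is a citation to Natale (Section 1.3 of the memoir), where the statement is obtained in one step from the Nichols--Zoeller theorem in its strong, \emph{relative Hopf module} form: since $A\subseteq H^{co\pi}$ and $H^{co\pi}$ is a left coideal subalgebra of $H$, the space $H^{co\pi}$ is simultaneously an $A$-module under multiplication and an $H$-comodule under $\Delta$, compatibly, i.e.\ an object of the category of relative $(H,A)$-Hopf modules; Nichols--Zoeller then says $H^{co\pi}$ is \emph{free} as an $A$-module, which gives the divisibility immediately and is a stronger conclusion than divisibility alone. You instead apply Nichols--Zoeller only to $H$ over $A$, import a second freeness theorem --- $H$ free over the coideal subalgebra $H^{co\pi}$ (Skryabin; or, if you work with \emph{left} modules, Schneider's normal basis theorem $H\cong H^{co\pi}\otimes B$ as left $H^{co\pi}$-modules, which is exactly what the paper's dimension formula rests on; note that for \emph{right} freeness over the left coideal subalgebra $H^{co\pi}$ you genuinely need Skryabin or a one-sided symmetry argument, so this hypothesis deserves the care you gave it) --- and then run a Krull--Schmidt multiplicity count, with the trivial module supplying the equation $nb=m$ that forces $m/n=\dim H^{co\pi}/\dim A\in\mathbb{Z}$. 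Both are valid; yours is assembled from standard black boxes at the cost of heavier input (two freeness theorems plus Krull--Schmidt versus one), while the paper's route is shorter and yields freeness of $H^{co\pi}$ over $A$ outright. It is worth noting that your own computation secretly recovers that too: your relations $nb_i=m\dim T_i$ for all $i$ give $b_i=(m/n)\dim T_i$ once $m/n$ is known to be an integer, hence $H^{co\pi}\cong A^{(m/n)}$ as right $A$-modules.
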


The notions of upper and lower semisolvability for
finite-dimensional Hopf algebras have been introduced in
\cite{Montgomery}, as generalizations of the notion of solvability
for finite groups. By definition, $H$ is called lower semisolvable
if there exists a chain of Hopf subalgebras
$$H_{n+1} = k\subseteq
H_{n}\subseteq\cdots \subseteq H_1 = H$$ such that $H_{i+1}$ is a
normal Hopf subalgebra of $H_i$, for all $i$, and all quotients
$H_{i}/H_{i}H^+_{i+1}$ are trivial. That is, they are isomorphic to
a group algebra or a dual group algebra. Dually, $H$ is called upper
semisolvable if there exists a chain of quotient Hopf algebras
$$H_{(0)} =
H\xrightarrow{\pi_1}H_{(1)}\xrightarrow{\pi_2}\cdots\xrightarrow{\pi_n}H(n)
= k$$ such that $H_{(i-1)}^{co\pi_{i}}$ is a normal Hopf subalgebra
of $H_{(i-1)}$, and all $H_{(i-1)}^{co\pi_i}$ are trivial.

In analogy with the situations for finite groups, it is enough for
many applications to know that a Hopf algebra is semisolvable.

By \cite[Corollary 3.3]{Montgomery}, we have that $H$ is upper
semisolvable if and only if $H^*$ is lower semisolvable. If this is
the case, then $H$ can be obtained from group algebras and their
duals by means of (a finite number of) extensions.

\begin{prop}\label{prop1}
Let $H$ be a semisimple Hopf algebra of dimension $pq^3$, where
$p,q$ are distinct prime numbers. If $H$ is not simple as a Hopf
algebra then it is semisolvable.
\end{prop}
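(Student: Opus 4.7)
The plan is to use non-simplicity to produce an exact sequence of Hopf algebras $k\to A\to H\to B\to k$ with $A,B$ of strictly smaller dimension, and then to apply the known classification results in dimensions $p,q,p^2,q^2,pq,p^3,pq^2$ to conclude that each of $A$ and $B$ is semisolvable. Concretely, since $H$ is not simple there is a proper nontrivial normal Hopf subalgebra $A\subset H$; let $B=H/HA^{+}$. By the Nichols--Zoeller theorem, $\dim A$ divides $pq^3$, so $(\dim A,\dim B)$ must lie in
\[
\{(p,q^3),(q,pq^2),(q^2,pq),(pq,q^2),(q^3,p),(pq^2,q)\}.
\]

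In each of these six cases at least one of $A,B$ has dimension in $\{p,q,p^2,q^2,pq\}$ and is therefore trivial by the results of Zhu and Masuoka and by the classification in dimension $pq$ (all cited in the introduction); this disposes of one factor. The other factor has dimension $q^3$ or $pq^2$: the first is semisolvable by Masuoka's classification of semisimple Hopf algebras of dimension $p^3$, and the second by the classification in dimension $pq^2$. Thus both $A$ and $B$ are semisolvable, and at least one of them is in fact trivial, in every case.

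To conclude, I would argue that semisolvability passes through extensions whenever one factor is trivial. If, for instance, $A$ is trivial and $B$ is upper semisolvable with chain $B=B_{(0)}\to B_{(1)}\to\cdots\to k$, then $H\to B\to B_{(1)}\to\cdots\to k$ is an upper semisolvable chain for $H$: the first coinvariant $H^{co\pi}$ equals $A$, which is trivial, and the subsequent coinvariants are the trivial ones inherited from $B$. A symmetric argument, this time pulling back a descending chain of Hopf subalgebras of $B$ along $\pi$ and appending $A\supset k$, handles the case in which $B$ is lower semisolvable, and likewise when the trivial factor is $B$ rather than $A$. Whenever the only chain available is of the wrong type, \cite[Corollary~3.3]{Montgomery} lets us dualize the whole extension and argue in $H^{*}$ instead.

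The main obstacle is largely bibliographic: one must verify that each classification result cited above actually yields a semisolvable Hopf algebra, and not merely some other kind of structural description. Once that is confirmed, the proposition falls out of the dimension count of the first paragraph together with the standard extension-closure argument of the third.
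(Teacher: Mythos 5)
Your opening dimension count and classification inputs match the paper's, and two of your four gluing configurations are sound: if $A$ is trivial and $B$ is \emph{upper} semisolvable, prepending $H\to B$ to $B$'s quotient chain works (the first coinvariants are exactly $A$, which is normal and trivial); dually, if $B$ is trivial and $A$ is \emph{lower} semisolvable, appending $A\subseteq H$ to $A$'s subalgebra chain works. The genuine gap is in the two mixed configurations, and both of your proposed fixes fail. ``Pulling back a descending chain of Hopf subalgebras of $B$ along $\pi$'' is not available: unlike for groups, the preimage $K=\pi^{-1}(\overline{K})$ of a Hopf subalgebra $\overline{K}\subseteq B$ under a Hopf surjection is a subalgebra stable under the antipode but not in general a subcoalgebra --- from $\pi(x)\in\overline{K}$ one only gets $\Delta(x)\in K\otimes K+\ker\pi\otimes H+H\otimes\ker\pi$ --- so there is no group-style correspondence theorem to invoke, and no extension-closure lemma for semisolvability is proved in \cite{Montgomery}. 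Dualizing does not rescue this either: passing to $k\to B^*\to H^*\to A^*\to k$ swaps subalgebra with quotient \emph{and} upper with lower simultaneously, so the stuck case ``trivial subalgebra, lower-semisolvable quotient'' is carried to the equally stuck case ``upper-semisolvable subalgebra, trivial quotient,'' not to a concatenable one. (A minor slip besides: $p^2$ cannot occur as $\dim A$ since $p^2\nmid pq^3$, though this is harmless.)

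The repair --- and it is exactly what the paper does --- is to never let a mixed case arise, by extracting from the small-dimension classifications a chain of the \emph{correct type on the correct side}, which they in fact provide. When $\dim K=q^3$, Masuoka's theorem \cite{Masuoka1} gives a non-trivial \emph{central} group-like $g\in K$, yielding the lower chain $k\subseteq k\langle g\rangle\subseteq K\subseteq H$ with all three subquotients trivial (of dimensions $q$, $q^2$, $p$); when $\dim K=pq^2$, \cite[Lemma 2.2]{dong} and \cite[Theorem 5.4.1]{Natale3} give a proper normal $L\subseteq K$ of dimension $p,q,pq$ or $q^2$, so again every link of $k\subseteq L\subseteq K\subseteq H$ is trivial. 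When instead $\dim K\in\{p,q\}$, the same results applied to the quotient $\overline{K}=H/HK^+$ (which is not simple) produce a normal $L\subseteq\overline{K}$ with both $L$ and $\overline{K}/\overline{K}L^+$ trivial, and the quotient chain $H\to\overline{K}\to\overline{K}/\overline{K}L^+\to k$ exhibits upper semisolvability directly, with no lifting needed. Your argument becomes correct once the general ``semisolvability passes through extensions'' claim is replaced by these explicit three-term chains whose subquotients are trivial at every step.
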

\begin{proof}
By assumption, $H$ has a proper normal Hopf subalgebra $K$.
Moreover, by Nichols-Zoeller Theorem \cite{Nichols2}, ${\rm dim}K$
divides ${\rm dim}H=pq^3$. We shall examine every possible ${\rm
dim}K$.

If ${\rm dim}K=q^2$ or $pq$ then $k\subseteq K\subseteq H$ is a
chain such that $K$ and $H/HK^+$ are both trivial (see
\cite{Etingof,Masuoka1}). Hence, $H$ is lower semisolvable.

If ${\rm dim}K=q^3$ then \cite{Masuoka1} shows that $K$ has a
non-trivial central group-like element $g$. Let $L=k\langle
g\rangle$ be the group algebra of the cyclic group $\langle
g\rangle$ generated by $g$. Then $k\subseteq L\subseteq K\subseteq
H$ is a chain such that $L,K/KL^+$ and $H/HK^+$ are all trivial (see
\cite{Zhu}). Hence, $H$ is lower semisolvable.

If ${\rm dim}K=pq^2$ then  \cite[Lemma 2.2]{dong} and \cite[Theorem
5.4.1]{Natale3} show that $K$ has a proper normal Hopf subalgebra
$L$ of dimension $p,q,pq$ or $q^2$. Then $k\subseteq L\subseteq
K\subseteq H$ is a chain such that $L,K/KL^+$ and $H/HK^+$ are all
trivial. Hence, $H$ is lower semisolvable.

Finally, we consider the case that ${\rm dim}K=p$ or $q$. Let $L$ be
a proper normal Hopf subalgebra of $H/HK^+$ (Notice that $H/HK^+$ is
not simple). Write $\overline{K}=H/HK^+$ and
$\overline{L}=\overline{K}/\overline{K}L^+$.  Then
$H\xrightarrow{\pi_1}\overline{K}\xrightarrow{\pi_2}\overline{L}\xrightarrow{}
k$ is a chain such that every map is normal and $H^{co\pi_1}$,
$(\overline{K})^{co\pi_2}$ are trivial.  Hence, $H$ is upper
semisolvable.
\end{proof}

\subsection{Radford's biproduct}\label{sec2-3}
Let $A$ be a semisimple Hopf algebra and let ${}^A_A\mathcal{YD}$
denote the braided category of Yetter-Drinfeld modules over $A$. Let
$R$ be a semisimple Yetter-Drinfeld Hopf algebra in
${}^A_A\mathcal{YD}$. Denote by $\rho :R\to A\otimes R$, $\rho
(a)=a_{-1} \otimes a_0 $, and $\cdot :A\otimes R\to R$, the coaction
and action of $A$ on $R$, respectively. We shall use the notation
$\Delta (a)=a^1\otimes a^2$ and $S_R $ for the comultiplication and
the antipode of $R$, respectively.

Since $R$ is in particular a module algebra over $A$, we can form
the smash product (see \cite[Definition 4.1.3]{Montgomery2}). This
is an algebra with underlying vector space $R\otimes A$,
multiplication is given by $$(a\otimes g)(b\otimes h)=a(g_1 \cdot
b)\otimes g_2 h, \mbox{\;for all\;}g,h\in A,a,b\in R,$$ and unit
$1=1_R\otimes1_A$.

Since $R$ is also a comodule coalgebra over $A$, we can dually form
the smash coproduct. This is a coalgebra with underlying vector
space $R\otimes A$, comultiplication is given by $$\Delta (a\otimes
g)=a^1\otimes (a^2)_{-1} g_1 \otimes (a^2)_0 \otimes g_2
,\mbox{\;for all\;}h\in A,a\in R, $$ and counit
$\varepsilon_R\otimes\varepsilon_A$.

As observed by D. E. Radford (see \cite[Theorem 1]{Radford}), the
Yetter-Drinfeld condition assures that $R\otimes A$ becomes a Hopf
algebra with these structures. This Hopf algebra is called the
Radford's biproduct of $R$ and $A$. We denote this Hopf algebra by $
R\#A$ and write $a\# g=a\otimes g$ for all $g\in A,a\in R$. Its
antipode is given by
$$S(a\# g)=(1\# S(a_{-1} g))(S_R (a_0 )\# 1),\mbox{\;for
all\;}g\in A,a\in R.$$

A biproduct $R\#A$ as described above is characterized by the
following property(see \cite[Theorem 3]{Radford}): suppose that $H$
is a finite-dimensional Hopf algebra endowed with Hopf algebra maps
$\iota:A\to H$ and $\pi:H\to A$ such that $\pi \iota:A\to A$ is an
isomorphism. Then the subalgebra $R= H^{co\pi}$ has a natural
structure of Yetter-Drinfeld Hopf algebra over $A$ such that the
multiplication map $R\#A\to H$ induces an isomorphism of Hopf
algebras.

Following \cite[Proposition 1.6]{Somm}, $H\cong R\#A$ is a biproduct
if and only if $H^*\cong R^*\#A^*$ is a biproduct.

The following lemma is a special case of \cite[Lemma
4.1.9]{Natale4}.
\begin{lem}\label{lem3}
Let $H$ be a semisimple Hopf algebra of dimension $pq^3$, where
$p,q$ are distinct prime numbers. If $gcd(|G(H)|,|G(H^*)|)=p$, then
$H\cong R\#kG$ is a biproduct, where $kG$ is the group algebra of
group $G$ of order $p$, $R$ is a semisimple Yetter-Drinfeld Hopf
algebra in $^{kG}_{kG}\mathcal{YD}$ of dimension $q^3$.
\end{lem}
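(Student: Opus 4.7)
The plan is to produce Hopf algebra maps $\iota:kG\to H$ and $\pi:H\to kG$ whose composition is an isomorphism, so that Radford's characterization (\cite[Theorem 3]{Radford}, recalled in Subsection \ref{sec2-3}) will yield the biproduct decomposition $H\cong H^{co\pi}\# kG$.

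Since $\gcd(|G(H)|,|G(H^*)|)=p$, the prime $p$ divides both $|G(H)|$ and $|G(H^*)|$, and Cauchy's theorem provides grouplikes $g\in G(H)$ and $\lambda\in G(H^*)$ of order $p$. Setting $G=\langle g\rangle$, the inclusion $k\langle g\rangle\hookrightarrow H$ supplies the desired Hopf algebra map $\iota:kG\to H$. Dually, the Hopf subalgebra $k\langle\lambda\rangle\subseteq H^*$ gives a surjective Hopf algebra map $\pi:H\twoheadrightarrow (k\langle\lambda\rangle)^*\cong kF$, where $F$ is a cyclic group of order $p$ (using that $k$ is algebraically closed of characteristic zero).

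The crucial step is to verify that $\pi\iota:kG\to kF$ is an isomorphism. Since $\pi\iota$ is a Hopf algebra map between group algebras of prime order $p$, the image $\pi\iota(kG)$ corresponds to a subgroup of $F$, hence equals either $k\cdot 1$ or all of $kF$. If the former occurs then $\pi(g)=1$, so $g\in H^{co\pi}$ and the Hopf subalgebra $kG$ sits inside $H^{co\pi}$; Lemma \ref{lem2} would then force $p=\dim kG$ to divide $\dim H^{co\pi}=pq^3/p=q^3$, contradicting $\gcd(p,q)=1$. So $\pi\iota$ must be an isomorphism; identifying $F$ with $G$ through it, one replaces $\pi$ by $(\pi\iota)^{-1}\pi$ to arrange $\pi\iota=\mathrm{id}_{kG}$. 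Radford's characterization then yields $H\cong R\# kG$ with $R=H^{co\pi}$ a Yetter-Drinfeld Hopf algebra in ${}^{kG}_{kG}\mathcal{YD}$ of dimension $q^3$, and semisimplicity of $R$ follows from that of $H$ by standard biproduct theory.

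The main obstacle is this nontriviality of $\pi\iota$; the dimension argument via Lemma \ref{lem2} combined with $\gcd(p,q)=1$ dispatches it cleanly. Note that the full hypothesis $\gcd(|G(H)|,|G(H^*)|)=p$ is used only to supply grouplikes of order $p$ in $G(H)$ and $G(H^*)$ simultaneously.
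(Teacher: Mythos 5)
Your proof is correct, and it is worth noting that the paper itself gives no argument at all for this statement: Lemma \ref{lem3} is disposed of there with the single remark that it is a special case of \cite[Lemma 4.1.9]{Natale4}. Your blind proposal in effect reconstructs the argument behind that citation, and it does so soundly. The setup is the natural one: Cauchy's theorem yields $g\in G(H)$ and $\lambda\in G(H^*)$ of order $p$; the inclusion $\iota: k\langle g\rangle\to H$ and the surjection $\pi: H\to (k\langle\lambda\rangle)^*\cong kF$, with $F$ cyclic of order $p$ (the identification using the $p$-th roots of unity available in $k$), put you in position to apply \cite[Theorem 3]{Radford}, and the only genuine issue is whether $\pi\iota$ could be trivial. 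Your dispatch of this is exactly right and is precisely where the hypothesis $\dim H=pq^3$ with $p\neq q$ enters: since Hopf algebra maps carry group-likes to group-likes, $\pi\iota$ is induced by a homomorphism between groups of prime order $p$, so if it is not an isomorphism then $\pi(g)=1$, whence $k\langle g\rangle\subseteq H^{co\pi}$, and Lemma \ref{lem2} forces $p\mid \dim H^{co\pi}=q^3$, absurd. (Equivalently, your argument shows one can always choose $g,\lambda$ here with $\lambda(g)\neq 1$; in a general semisimple Hopf algebra this nondegeneracy is exactly the hypothesis that must be verified, which is why the general lemma in \cite{Natale4} needs more care than the present special case.) Two small points you gloss over are easily repaired: the semisimplicity of $R=H^{co\pi}$, asserted as ``standard biproduct theory,'' deserves a line --- for instance, the Jacobson radical $J(R)$ is stable under the $G$-action, so $J(R)\# kG$ is a nilpotent ideal of the semisimple algebra $H\cong R\# kG$, forcing $J(R)=0$ (alternatively one can argue with integrals and the Yetter--Drinfeld version of Maschke's theorem, cf.\ \cite{Somm}); and the normalization replacing $\pi$ by $(\pi\iota)^{-1}\pi$ is harmless but unnecessary, since Radford's characterization as recalled in Subsection \ref{sec2-3} only requires $\pi\iota$ to be an isomorphism, not the identity.
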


\subsection{Drinfeld double}For a finite-dimensional Hopf algebra
$H$, $D(H)=H^{*cop}\bowtie H$ will denote the Drinfeld double of
$H$. $D(H)$ is a Hopf algebra with underlying vector space
$H^{*cop}\otimes H$. More details on $D(H)$ can be found in
\cite[Section 10.3]{Montgomery2}. The following theorem follows
directly from \cite[Proposition 9,10]{Radford2}.

\begin{thm}
Suppose that $H$ is a finite-dimensional Hopf algebra.

(1) The map $G(H^*)\times G(H)\to G(D(H))$, given by
$(\eta,g)\mapsto \eta\bowtie g$, is a group isomorphism.

(2) Every group-like element of $D(H)^*$ is of the form $g\otimes
\eta$, where $g\in G(H)$ and $\eta\in G(H^*)$. Moreover, $g\otimes
\eta\in G(D(H)^*)$ if and only if $\eta\bowtie g$ is in the center
of $D(H)$.
\end{thm}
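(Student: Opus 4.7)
The plan is to work directly with the presentation $D(H) = H^{*\mathrm{cop}}\bowtie H$: as a coalgebra this is a tensor product of coalgebras, and as an algebra it is generated by the Hopf subalgebras $H^{*\mathrm{cop}}\bowtie 1$ and $1\bowtie H$.

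For part (1), since $D(H)$ is the tensor product coalgebra $H^{*\mathrm{cop}}\otimes H$, its group-like elements are exactly the pure tensors $\eta\otimes g$ with $\eta\in G(H^{*\mathrm{cop}})=G(H^*)$ and $g\in G(H)$; this standard fact is most easily seen by dualizing to algebra homomorphisms $H\otimes H^*\to k$ and invoking the universal property of the tensor product of commuting algebras. To verify that $(\eta,g)\mapsto\eta\bowtie g$ is a group homomorphism, I would expand $(\eta\bowtie g)(\eta'\bowtie g')$ via the smash-product formula; when $g$ and $\eta'$ are both group-like the twisting factor collapses, since $\eta'(S^{-1}(g)hg)=\eta'(g)^{-1}\eta'(h)\eta'(g)=\eta'(h)$ (group-like elements of $H^*$ are multiplicative on $H$), and the product reduces to $(\eta\eta')\bowtie(gg')$, as required.

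For part (2), identify $G(D(H)^*)$ with the set of algebra homomorphisms $\phi:D(H)\to k$. The restriction $\phi|_{H^{*\mathrm{cop}}\bowtie 1}$ is an algebra map on $H^*$ and corresponds to a group-like $g\in G(H^{**})=G(H)$; similarly $\phi|_{1\bowtie H}$ yields $\eta\in G(H^*)$. Because these two subalgebras generate $D(H)$, $\phi$ is forced to be $\alpha\bowtie h\mapsto\alpha(g)\eta(h)$, which is precisely the element $g\otimes\eta$ under the identification $D(H)^*\cong H\otimes H^*$. Conversely, given any such pair $(g,\eta)$, the corresponding functional already respects the multiplication within each of the two subalgebras; the remaining condition for it to be an algebra map on all of $D(H)$ is compatibility with the cross relations of the smash product, and a direct computation using the multiplication formula $(\alpha\bowtie h)(\beta\bowtie k)=\sum\alpha(h_{(1)}\rightharpoonup\beta\leftharpoonup S^{-1}(h_{(3)}))\bowtie h_{(2)}k$ shows this is equivalent to $\eta\bowtie g$ commuting with every $\beta\bowtie 1$ and $1\bowtie h$, i.e., to the centrality of $\eta\bowtie g$ in $D(H)$.

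The hardest step will be the last equivalence in part (2). Unwinding the algebra-map compatibility produces the identity $\sum\beta(S^{-1}(h_{(3)})gh_{(1)})\eta(h_{(2)})=\beta(g)\eta(h)$, while unwinding centrality produces a pair of commutation equations involving the coadjoint actions of $g$ on $\beta$ and of $h$ on $\eta$. Matching the two conditions is a Sweedler-index computation that uses the antipode axioms together with the group-like identities for $g$ and $\eta$; the main care must be taken with the appearance of $S^{-1}$ and with the opposite comultiplication on $H^{*\mathrm{cop}}$.
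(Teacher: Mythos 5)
Your proposal is correct, but it takes a genuinely different route from the paper: the paper gives no argument at all, simply deducing the theorem from Propositions 9 and 10 of Radford's paper on minimal quasitriangular Hopf algebras (\cite{Radford2}), whereas you reconstruct a self-contained proof from the presentation $D(H)=H^{*\mathrm{cop}}\bowtie H$. Your part (1) is clean: the tensor-coalgebra fact gives $G(D(H))=G(H^*)\times G(H)$ as a set, and the twisting factor indeed collapses for group-likes since $\eta'(S^{-1}(g)hg)=\eta'(h)$. In part (2), your identification of $G(D(H)^*)$ with algebra maps $\phi:D(H)\to k$, the restriction argument producing $(g,\eta)$, and the reduction of the remaining verification to the cross relations are all sound, and the key identity you isolate, $\sum\beta\bigl(S^{-1}(h_{(3)})\,g\,h_{(1)}\bigr)\eta(h_{(2)})=\beta(g)\eta(h)$, is exactly right. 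The Sweedler computation you defer does go through, and in fact organizes nicely: multiplying that identity (applied to $h_{(1)}$) on the left by $h_{(2)}$ and using $\sum h_{(2)}S^{-1}(h_{(1)})=\varepsilon(h)1$ shows it is equivalent to the single condition $\sum g\,h_{(1)}\eta(h_{(2)})=\sum\eta(h_{(1)})\,h_{(2)}\,g$, which is precisely commutation of $\eta\bowtie g$ with all $\beta\bowtie 1$; commutation with all $1\bowtie h$ then follows from this for free, using that $\eta\circ S^{-1}$ is the convolution inverse of $\eta$, so that $\sum\eta(h_{(2)})\eta(S^{-1}(h_{(3)}))$ collapses to $\varepsilon$. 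What your approach buys is transparency and self-containedness — the reader sees exactly where centrality enters — at the cost of length; what the paper's citation buys is brevity, outsourcing precisely these computations (which are essentially Radford's original ones) to \cite{Radford2}.
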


\begin{cor}\label{cor1}
Suppose that $H$ is a finite-dimensional Hopf algebra such that
$G(D(H)^*)$ is non-trivial. If $gcd(|G(H)|,|G(H^*)|)=1$ then $H$ or
$H^*$ has a non-trivial central group-like element.
\end{cor}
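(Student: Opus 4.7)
The plan is to use the theorem just stated to extract a non-trivial central element $\eta\bowtie g$ of $D(H)$, and then exploit the coprimality hypothesis to raise it to a power whose image lies entirely in $H$ or entirely in $H^{*\mathrm{cop}}$, and which is non-trivial there.

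Concretely, the hypothesis $G(D(H)^*)\neq 1$ together with part (2) of the theorem yields a pair $(g,\eta)\in G(H)\times G(H^*)$, not both trivial, with $\eta\bowtie g\in Z(D(H))$. Set $m=|g|$ and $n=|\eta|$; since $m$ divides $|G(H)|$, $n$ divides $|G(H^*)|$, and these two orders are coprime, we obtain $\gcd(m,n)=1$. By part (1), the map $(\eta,g)\mapsto\eta\bowtie g$ is a group isomorphism $G(H^*)\times G(H)\to G(D(H))$, so powers in $D(H)$ are computed componentwise: $(\eta\bowtie g)^k=\eta^k\bowtie g^k$.

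Without loss of generality I would assume $g\neq 1$; the case $\eta\neq 1$ is symmetric and produces a non-trivial central group-like of $H^*$ via $(\eta\bowtie g)^m=\eta^m\bowtie 1\in H^{*\mathrm{cop}}$. Then $(\eta\bowtie g)^n=1\bowtie g^n$, which under the embedding $H\hookrightarrow D(H)$ is just $g^n$. By the coprimality $\gcd(m,n)=1$ together with $m>1$, the element $g^n$ still has order $m$ and is therefore a non-trivial group-like of $H$. As a power of a central element of $D(H)$, it is central in $D(H)$; and since $H$ sits inside $D(H)$ as a subalgebra, $g^n$ must commute with every element of $H$, giving the desired non-trivial central group-like element of $H$.

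I do not foresee any serious obstacle. The one point to be careful about is checking that centrality in the big algebra $D(H)$ descends to centrality in $H$ (or in $H^*$), but this is automatic from the subalgebra embeddings $H,H^{*\mathrm{cop}}\hookrightarrow D(H)$; the coprimality of $|G(H)|$ and $|G(H^*)|$ is used exactly to ensure that the chosen power of $\eta\bowtie g$ lands in the correct factor while remaining non-trivial.
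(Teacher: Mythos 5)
Your proof is correct and follows essentially the same route as the paper's: both extract the pair $(g,\eta)$ from a non-trivial element of $G(D(H)^*)$, use $\gcd(|G(H)|,|G(H^*)|)=1$ to make the orders of $g$ and $\eta$ coprime, and raise to the order of one component so that the other component survives as a non-trivial central group-like element. The only cosmetic difference is that the paper computes powers inside $G(D(H)^*)$ and then applies part (2) of the theorem to the resulting element $1\otimes\eta^n$, whereas you apply part (2) first and take powers of the central group-like $\eta\bowtie g$ inside $D(H)$ itself.
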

\begin{proof}
Let $1\neq g\otimes \eta\in G(D(H)^*)$. We may assume that $1\neq
g\in G(H)$, since otherwise $\eta\in G(H^*)$ would be a non-trivial
central group-like element, and similarly we may assume that
$\varepsilon\neq\eta\in G(H^*)$. Since $gcd(|G(H)|,|G(H^*)|)=1$, the
order of $g$ and $\eta$ are different. Assume that the order of $g$
is $n$. Then $(g\otimes \eta)^n=g^n\otimes \eta^n=1\otimes
\eta^n\neq1\otimes \varepsilon$ implies that $\eta^n\bowtie 1$ is in
the center of $D(H)$. Hence, $\eta^n$ is a non-trivial central
group-like element in $G(H^*)$. Similarly, we can prove that $G(H)$
also has a non-trivial central group-like element.
\end{proof}

\section{Semisimple Hopf algebras of dimension $pq^3$}\label{sec3}
\begin{lem}\label{lem4}
Let $H$ be a semisimple Hopf algebra of dimension $pq^3$, where
$p<q$ are prime numbers. If $H$ has a Hopf subalgebra $K$ of
dimension $q^3$ then $H$ is lower semisolvable.
\end{lem}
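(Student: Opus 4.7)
The strategy is to reduce to Proposition \ref{prop1}: once $K$ is known to be a normal Hopf subalgebra of $H$, the treatment of the $\dim K = q^3$ case there, which uses a non-trivial central group-like element of $K$ supplied by the classification of semisimple Hopf algebras of dimension $q^3$, produces a chain $k\subset L\subset K\subset H$ witnessing lower semisolvability. So the essential task is to show that $K$ is normal in $H$.

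To this end, I would analyze the character $\lambda\in H^*$ of the induced module $\mathrm{Ind}_K^H k = H/HK^+$, which has degree $[H:K]=p$. Since $p<q$ and every irreducible character degree of $H$ divides $\dim H = pq^3$, the only degrees at most $p$ are $1$ and $p$. Thus either (A) $\lambda = \mu_1 + \cdots + \mu_p$ is a sum of $p$ linear characters, or (B) $\lambda$ is itself a single irreducible character of degree $p$.

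In case (A), Frobenius reciprocity gives $\mu_i|_K = \varepsilon_K$ for each $i$, so the $\mu_i$ form a subgroup $N\subseteq G(H^*)$ of order $p$. The group algebra $kN$ is a Hopf subalgebra of $H^*$ of dimension $p$, and dualizing yields a Hopf quotient $\pi\colon H\to (kN)^*$ of dimension $p$. A direct computation using $\mu_i|_K = \varepsilon_K$ shows $K\subseteq H^{co\pi}$ and $K\subseteq {}^{co\pi}H$; since both coinvariants have dimension $q^3 = \dim K$, we conclude $K = H^{co\pi} = {}^{co\pi}H$, so $K$ is a normal Hopf subalgebra of $H$, and Proposition \ref{prop1} concludes.

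The main obstacle is ruling out case (B). Write $\rho := \lambda$, an irreducible character of degree $p$. Since non-trivial $K$-irreducible degrees divide $q^3$ and so exceed $p-1$, Frobenius reciprocity forces $\mathrm{Res}_K\rho = \varepsilon_K + \sum_i n_i\nu_{(i)}$ with the $\nu_{(i)}$ distinct non-trivial linear $K$-characters and $\sum_i n_i = p-1$. The projection formula gives $\rho\otimes\rho = \mathrm{Ind}_K^H \mathrm{Res}_K\rho = \rho + \sum_i n_i\,\mathrm{Ind}_K^H\nu_{(i)}$, and a second application of Frobenius shows the multiplicity of $\rho$ in $\rho\otimes\rho$ equals $1 + \sum_i n_i^2 \geq 1 + \sum_i n_i = p$. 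On the other hand, case (B) implies the restriction map $G(H^*)\to G(K^*)$ is injective (again by Frobenius), so $|G(H^*)|$ is a power of $q$; then Lemma \ref{lem1} forces $|G[\rho]|$ to divide $\gcd(p^2, q^a) = 1$, giving $G[\rho]=\{\varepsilon\}$. Formula (2.1) now yields $\rho\rho^* = \varepsilon + (\text{sum of irreps of degree}>1)$ with non-identity part of total degree $p^2-1$, so the multiplicity of $\rho$ in $\rho\rho^* = \rho\otimes\rho$ is at most $(p^2-1)/p < p$, the desired contradiction.
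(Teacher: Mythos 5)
Your proposal is correct in substance but follows a genuinely different route from the paper. The paper's proof is two lines: since $[H:K]=p$ is the smallest prime dividing ${\rm dim}H$, the Kobayashi--Masuoka theorem \cite{Kobayashi} immediately gives that $K$ is normal in $H$, and Proposition \ref{prop1} finishes. You instead reprove this instance of Kobayashi--Masuoka by hand via the character $\lambda$ of $H/HK^{+}$; your case (A) argument (the linear constituents of $\lambda$ are exactly $N=\{\mu\in G(H^*):\mu|_K=\varepsilon_K\}$, a group of order $p$, and then $K\subseteq H^{co\pi}\cap{}^{co\pi}H$ plus the dimension count forces $K=H^{co\pi}={}^{co\pi}H$) is correct and is essentially the standard proof of that theorem. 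What the citation buys the paper is brevity and freedom from any Frobenius-type input; what your argument buys is self-containedness, but at a price you should acknowledge: your dichotomy (A)/(B) silently assumes that irreducible degrees of $H$ divide $pq^3$, which is Kaplansky's conjecture in general and in this paper is only available via \cite[Lemma 2.2]{dong} (ultimately \cite{Etingof2}); likewise the claim that nontrivial irreducible $K$-degrees exceed $p-1$ uses Frobenius type in prime-power dimension. Finally, your case (B) is redundant: since induction and coinduction coincide for semisimple Hopf algebras, Frobenius reciprocity gives $m(\varepsilon,\lambda)={\rm dimHom}_K(k,k)=1$, so $\lambda$ always contains the trivial character and cannot be irreducible of degree $p>1$; your longer contradiction does go through (note it implicitly uses the self-duality $\lambda^*=\lambda$ when identifying $\rho\rho^*$ with $\rho\otimes\rho$), but the one-line observation suffices.
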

\begin{proof}
Since the index $[H:K]=p$ is the smallest prime number dividing
${\rm dim}H$, the result in \cite{Kobayashi} shows that $K$ is a
normal Hopf algebra of $H$. The lemma then follows from Proposition
\ref{prop1}.
\end{proof}

In the rest of this section, $p,q$ will be distinct prime numbers
such that $p^2<q$, and $H$ will be a non-trivial semisimple Hopf
algebra of dimension $pq^3$.

Recall that a semisimple Hopf algebra $A$ is called of Frobenius
type if the dimensions of the simple $A$-modules divide the
dimension of $A$. Kaplansky conjectured that every
finite-dimensional semisimple Hopf algebra is of Frobenius type
\cite[Appendix 2]{Kaplansky}. It is still an open problem. However,
many examples show that a positive answer to Kaplansky's conjecture
would be very helpful in the classification of semisimple Hopf
algebras.

By \cite[Lemma 2.2]{dong}, $H$ is of Frobenius type and
$|G(H^*)|\neq1$. Therefore, the dimension of a simple $H$-module can
only be $1,p,q$ or $pq$.  It follows that we have an equation
$$pq^3=|G(H^*)|+p^2a+q^2b+p^2q^2c,\eqno(3.1)$$ where $a,b,c$ are the numbers of
non-isomorphic simple $H$-modules of dimension $p,q$ and $pq$,
respectively. By Nichols-Zoeller Theorem \cite{Nichols2}, the order
of $G(H^*)$ divides ${\rm dim}H$.  We shall give some results
concerning the order of $G(H^*)$.

\begin{lem}\label{lem5}
The order of $G(H^*)$ can not be $q$.
\end{lem}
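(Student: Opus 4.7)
The plan is to assume $|G(H^*)|=q$ and derive a contradiction by exhibiting an irreducible character $\chi$ of degree $p$ for which the decomposition (2.1) of $\chi\chi^*$ is forced into an impossible shape.

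My first step is to ensure that an irreducible character of degree $p$ actually exists, i.e., that $a\geq 1$ in equation (3.1). If $a=0$, then (3.1) reduces to $pq^3=q+q^2b+p^2q^2c$, from which dividing by $q$ yields $pq^2-1=q(b+p^2c)$, so $q\mid 1$, which is impossible. Hence $a\geq 1$, and I may fix an irreducible $\chi$ with $\deg\chi=p$.

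Next I would pin down $G[\chi]$. By Lemma \ref{lem1}(1), $|G[\chi]|$ divides $p^2$; on the other hand $G[\chi]$ is a subgroup of $G(H^*)$, of order $q$, so $|G[\chi]|$ divides $q$. Since $p,q$ are distinct primes with $p^2<q$, we have $\gcd(p^2,q)=1$, forcing $G[\chi]=\{\varepsilon\}$. Substituting into (2.1) and comparing degrees gives
$$p^2-1=\sum_{\alpha\in\widehat{H},\,\deg\alpha>1}m(\alpha,\chi\chi^*)\deg\alpha,$$
where each summand has $\deg\alpha\in\{p,q,pq\}$.

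The closing step is the decisive estimate: since $p^2-1<p^2<q\leq pq$, no character of degree $q$ or $pq$ can appear on the right-hand side, so every surviving $\alpha$ has degree $p$, and the whole right-hand side is a multiple of $p$. This gives $p\mid p^2-1$, hence $p\mid 1$, a contradiction. I do not anticipate any substantial obstacle; the only conceptual point is that the hypothesis $p^2<q$ plays two distinct roles, collapsing $G[\chi]$ to the trivial subgroup and simultaneously blocking larger-degree characters from appearing in the product $\chi\chi^*$.
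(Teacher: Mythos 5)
Your proof is correct and is essentially the paper's own argument: both rest on the same two ingredients, namely the triviality of $G[\chi]$ (Lemma \ref{lem1}(1) combined with $G[\chi]\leq G(H^*)$ and $\gcd(p^2,q)=1$) feeding into the decomposition (2.1), and the mod-$q$ impossibility of equation (3.1) when $a=0$; you merely run the two halves in the opposite order, first forcing $a\geq 1$ from (3.1) and then contradicting (2.1), where the paper derives $a=0$ from the character contradiction and then contradicts (3.1). You also usefully make explicit the step the paper leaves terse, that $p^2-1<q$ rules out constituents of degree $q$ or $pq$ so that the right-hand side is divisible by $p$, forcing $p\mid p^2-1$ (though note the collapse of $G[\chi]$ needs only $p\neq q$, not the full strength of $p^2<q$).
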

\begin{proof}
Suppose on the contrary that $|G(H^*)|=q$. Let $\chi$ be an
irreducible character of  degree $p$. By Lemma \ref{lem1} (1) and
the fact that $G[\chi]$ is a subgroup of $G(H^*)$, we know that
$G[\chi]=\{\varepsilon\}$ is trivial. It follows that the
decomposition of $\chi\chi^*$  (2.1) gives rise to a contradiction,
since $p^2<q$. Therefore, $a=0$ and equation (3.1) is
$pq^3=q+q^2b+p^2q^2c$, which is impossible.
\end{proof}

\begin{lem}\label{lem6}
If $|G(H^*)|=q^2$ then $a=0$ and $b\neq 0$.
\end{lem}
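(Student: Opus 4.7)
The plan is to combine the counting equation (3.1), in the specialized form
\[
pq^{3}=q^{2}+p^{2}a+q^{2}b+p^{2}q^{2}c,
\]
with the constraint (2.1) on $\chi\chi^{*}$ for an irreducible character of degree $p$, exactly as in the proof of Lemma \ref{lem5}. Each conclusion ($a=0$ and $b\neq 0$) should then fall out of a divisibility contradiction.

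First, I would show $a=0$ by contradiction. Suppose $\chi\in X_{p}$. By Lemma \ref{lem1}(1), $|G[\chi]|$ divides $(\deg\chi)^{2}=p^{2}$; but $G[\chi]\subseteq G(H^{*})$ has order dividing $q^{2}$, so $\gcd(p^{2},q^{2})=1$ forces $G[\chi]=\{\varepsilon\}$. The decomposition (2.1) becomes
\[
\chi\chi^{*}=\varepsilon+\sum_{\alpha\in\widehat{H},\,\deg\alpha>1}m(\alpha,\chi\chi^{*})\,\alpha.
\]
Since $\deg(\chi\chi^{*})=p^{2}<q$, no summand of degree $q$ or $pq$ can occur, so every non-trivial $\alpha$ appearing here has degree $p$. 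Equating degrees gives $p^{2}=1+pk$ for some nonnegative integer $k$, hence $p\mid 1$, a contradiction. Therefore $a=0$.

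With $a=0$, the counting equation reduces (after dividing by $q^{2}$) to
\[
pq=1+b+p^{2}c.
\]
If $b=0$, then $pq-1=p^{2}c$, whence $p\mid pq-1$, i.e., $p\mid 1$, again impossible. So $b\neq 0$.

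I do not anticipate a real obstacle: the argument is essentially the same two-step divisibility check used for Lemma \ref{lem5}, and the hypothesis $p^{2}<q$ is precisely what kills the potential degree-$q$ and degree-$pq$ contributions to $\chi\chi^{*}$, leaving only the degree-$p$ summands whose total degree is incongruent to $p^{2}-1$ modulo $p$.
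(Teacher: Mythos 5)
Your proposal is correct and takes essentially the same route as the paper: the paper's proof just says ``a similar argument as in Lemma \ref{lem5} shows that $a=0$'' and that $b\neq 0$ is obvious from the reduced equation, and you have supplied exactly the intended details --- $G[\chi]$ is trivial since its order divides both $p^{2}$ and $q^{2}$, the hypothesis $p^{2}<q$ excludes degree-$q$ and degree-$pq$ constituents of $\chi\chi^{*}$ so that (2.1) forces $p\mid p^{2}-1$, and with $a=0$ the equation $pq=1+b+p^{2}c$ gives $p\mid 1$ if $b=0$. No gaps; this is a faithful expansion of the paper's sketch.
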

\begin{proof}
A similar argument as in Lemma \ref{lem5} shows that $a=0$.
Therefore, equation (3.1) is $pq^3=q^2+q^2b+p^2q^2c$. Obviously,
$b\neq0$, otherwise a contradiction will occur.
\end{proof}

\begin{lem}\label{lem7}
If $|G(H^*)|=q^3$ then $H$ is upper semisolvable.
\end{lem}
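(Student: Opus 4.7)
The plan is to work with $H^*$ rather than $H$ directly, establish lower semisolvability of $H^*$, and then invoke the duality between upper and lower semisolvability recalled just before Proposition \ref{prop1}.

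First I would use the hypothesis $|G(H^*)|=q^3$ to single out the group algebra $K:=k[G(H^*)]$, which is a Hopf subalgebra of $H^*$ of dimension $q^3$. Because the standing assumption $p^{2}<q$ forces $p<q$, the index $[H^*:K]=p$ equals the smallest prime divisor of $\dim H^* = pq^3$. This is precisely the hypothesis of the Kobayashi-type result invoked in the proof of Lemma \ref{lem4}, so the same argument applied to $H^*$ shows that $K$ is a normal Hopf subalgebra of $H^*$.

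Next I would exhibit the chain $k\subseteq K\subseteq H^*$ as a lower-semisolvability chain for $H^*$. The first quotient $K/Kk^+\cong K$ is a group algebra by construction, hence trivial. The second quotient $H^*/H^*K^+$ has prime dimension $p$, so it is isomorphic to the group algebra $k\mathbb{Z}/p\mathbb{Z}$ by Zhu's theorem \cite{Zhu}, hence also trivial. Finally, \cite[Corollary 3.3]{Montgomery} (recalled in Section \ref{sec2-2}) states that $H$ is upper semisolvable if and only if $H^*$ is lower semisolvable, which closes the argument.

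The main conceptual step is recognizing that the information $|G(H^*)|=q^3$ is most naturally a statement about the Hopf subalgebra structure of $H^*$; once one has shifted attention to the dual side, normality of $K$ is immediate from the index being the smallest prime and there is no character theory or case analysis to perform. In particular, in contrast with Lemmas \ref{lem5}--\ref{lem6}, we do not need to analyze equation (3.1) or decompositions $\chi\chi^*$ at all.
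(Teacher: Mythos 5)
Your proof is correct and takes essentially the same route as the paper: the paper's one-line proof invokes Lemma \ref{lem4} applied to $H^*$ (whose group algebra $k[G(H^*)]$ is a Hopf subalgebra of dimension $q^3$), which is precisely your Kobayashi normality step combined with the duality statement \cite[Corollary 3.3]{Montgomery}. The only difference is cosmetic: Lemma \ref{lem4} routes through Proposition \ref{prop1}, whose $q^3$-case uses Masuoka's central group-like element to handle a possibly non-trivial subalgebra $K$, whereas you exploit that $K=k[G(H^*)]$ is already a group algebra and so get away with the shorter chain $k\subseteq K\subseteq H^*$ and Zhu's theorem \cite{Zhu} for the dimension-$p$ quotient --- a mild streamlining, not a different method.
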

\begin{proof}
It follows from Lemma \ref{lem4}.
\end{proof}

\begin{lem}\label{lem8}
If $|G(H^*)|=q^2$ and $H$ has a Hopf subalgebra $K$ of dimension
$pq^2$ then $H$ is lower semisolvable.
\end{lem}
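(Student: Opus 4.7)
The plan is to show that $K$ is a normal Hopf subalgebra of $H$; lower semisolvability then follows from the case $\dim K = pq^2$ in the proof of Proposition \ref{prop1}.

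Since $H$ is free of rank $q$ as a right $K$-module by Nichols--Zoeller, the induced module $M := H\otimes_K k \cong H/HK^+$ is a left $H$-module of dimension $q$. Let $\psi$ denote its character, and for each irreducible character $\chi$ of $H$ write $V_\chi$ for the corresponding simple module. By Lemma \ref{lem6} the possible degrees of simple $H$-modules are $1$, $q$, and $pq$, and since $pq > q = \dim M$, no character of degree $pq$ can appear in $\psi$. Frobenius reciprocity gives $m(\chi,\psi) = \dim V_\chi^K$: for $\chi \in X_1$ this multiplicity is $1$ if $\chi|_K = \varepsilon_K$ and $0$ otherwise, while for $\chi \in X_q$ it is at most $1$ from the degree bound $m(\chi,\psi)\deg\chi \le \dim M$. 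Setting $N := \{\chi \in G(H^*) : \chi|_K = \varepsilon\}$, a subgroup of $G(H^*)$, the dimension count reads
$$q \;=\; |N| \;+\; q\,\bigl|\{\chi\in X_q : V_\chi^K\ne 0\}\bigr|,$$
so $|N|$ is a nonnegative multiple of $q$ bounded above by $q$; together with $\varepsilon \in N$ this forces $|N| = q$.

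The group algebra $kN \subseteq H^*$ is therefore a Hopf subalgebra of dimension $q$, and dualizing yields a Hopf algebra surjection $\pi : H \to B := (kN)^*$ with $\dim H^{co\pi} = pq^2$. Because every $\chi \in N$ satisfies $\chi|_K = \varepsilon_K$ and $\Delta(K)\subseteq K\otimes K$, a direct computation gives $\pi(k) = \varepsilon(k)\cdot 1_B$ for each $k\in K$, so $K \subseteq H^{co\pi}$; equality then follows by comparing dimensions. Since $K$ is a Hopf subalgebra of $H$, the equivalence recalled in Subsection \ref{sec2-2} shows that $\pi$ is normal, hence $K$ is a normal Hopf subalgebra of $H$, and Proposition \ref{prop1} delivers the lower semisolvability of $H$. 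The delicate step is the dimension count forcing $|N| = q$, which relies on Lemma \ref{lem6} (and thus on the hypothesis $|G(H^*)| = q^2$) to rule out simple $H$-modules of degree $p$; once this is in hand, the passage from $N$ to the normal Hopf quotient detecting $K$ as coinvariants is essentially formal.
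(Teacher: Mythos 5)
Your proof is correct, and at its core it is the dual rendition of the paper's argument, finished by a different route. The paper transposes the inclusion $K\subseteq H$ to a surjection $\pi:H^*\to K^*$ and decomposes the $q$-dimensional coinvariant space $(H^*)^{co\pi}$ into irreducible left coideals of $H^*$: Lemma \ref{lem6} rules out constituents of dimension $p$, the unit spans a trivial summand, and the remaining $q-1<q$ dimensions force all constituents to be spanned by group-likes, whence $(H^*)^{co\pi}=k\langle g\rangle$ with $g$ of order $q$ by Lemma \ref{lem2}. Your module $M=H/HK^+$ is exactly the dual of that coinvariant space, your Frobenius-reciprocity count is the dual of that coideal decomposition, and your subgroup $N$ is precisely the set of group-like elements lying in $(H^*)^{co\pi}$, since for $\chi\in G(H^*)$ one has $\chi\in(H^*)^{co\pi}$ if and only if $\chi|_K=\varepsilon_K$. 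The genuine divergence is the endgame: the paper stops as soon as $k\langle g\rangle$, being a Hopf subalgebra arising as coinvariants, is normal in $H^*$, and applies Proposition \ref{prop1} to $H^*$ in the dimension-$q$ case (upper semisolvability of $H^*$ being equivalent to lower semisolvability of $H$); you instead dualize $kN\subseteq H^*$ a second time, verify $K=H^{co\pi}$ via the computation $\pi(x)=\varepsilon(x)1_B$ for $x\in K$, and conclude that $K$ itself is normal in $H$, landing in the $pq^2$ case of Proposition \ref{prop1}. Your version proves slightly more --- the explicit normality of the given subalgebra $K$, which the paper never records --- and your congruence $|N|\equiv 0\pmod q$ together with $\varepsilon\in N$ replaces the appeal to Lemma \ref{lem2}; the paper's version is shorter because coinvariants that happen to be Hopf subalgebras are automatically normal, so no second dualization or coinvariance check for $K$ is needed. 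All the individual steps you use (freeness by Nichols--Zoeller, the bound excluding degree $pq$, the multiplicity-one observation for $X_q$, and the identification of $H^{co\pi}$) are sound, so the argument stands as written.
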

\begin{proof}
Considering the map $\pi:H^*\to K^*$ obtained by transposing the
inclusion $K\subseteq H$, we have ${\rm dim}(H^*)^{co\pi}=q$ (see
Section \ref{sec2-2}). By Lemma \ref{lem6}, the dimension of every
irreducible left coideal of $H^*$ is $1,q$ or $pq$. Therefore, by
Lemma \ref{lem2}, as a left coideal of $H^*$, $(H^*)^{co\pi}$
decomposes in the form $(H^*)^{co\pi}=k\langle g\rangle$, where
$\langle g\rangle$ is a subgroup of $G(H^*)$ generated by $g$ which
is of order $q$. Therefore, $(H^*)^{co\pi}$ is a normal Hopf
subalgebra of $H^*$. The lemma then follows from Proposition
\ref{prop1}.
\end{proof}
\begin{lem}\label{lem9}
If $|G(H^*)|=p$ then $H$ is either semisolvable, or isomorphic to a
Radford's biproduct $H\cong R\#kG$, where $kG$ is the group algebra
of $G$ of order $p$, $R$ is a semisimple Yetter-Drinfeld Hopf
algebra in ${}^{kG}_{kG}\mathcal{YD}$ of dimension $q^3$.
\end{lem}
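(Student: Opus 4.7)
The plan is to split the argument according to whether $p$ divides $|G(H)|$.

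If $p \mid |G(H)|$, then since $|G(H^*)| = p$ we have $\gcd(|G(H)|, |G(H^*)|) = p$, and Lemma \ref{lem3} directly produces the Radford biproduct $H \cong R \# kG$ of the stated form. This covers the biproduct alternative of the conclusion.

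Assume from now on $p \nmid |G(H)|$, so $\gcd(|G(H)|, |G(H^*)|) = 1$ and $|G(H)| \in \{1, q, q^2, q^3\}$ by Nichols--Zoeller. The goal is to show $H$ is semisolvable; by Proposition \ref{prop1} it suffices to exhibit a non-trivial proper normal Hopf subalgebra. When $|G(H)| = q^3$, the Hopf subalgebra $kG(H)$ has dimension $q^3$ and Lemma \ref{lem4} applies at once. For $|G(H)| \in \{1, q, q^2\}$, I would first use Equation (2.1) together with the bound $p^2 < q$ to force $|G[\chi]| = p$ for every irreducible character $\chi$ of degree $p$: if instead $|G[\chi]| = 1$, then the degree-$(p^2 - 1)$ remainder of $\chi\chi^*$ would have to decompose into irreducibles of degrees $p$, $q$, or $pq$, but $p^2 - 1 < q$ forbids a $q$- or $pq$-summand and $p \nmid p^2 - 1$ forbids a purely degree-$p$ decomposition. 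Consequently every degree-$p$ irreducible character is fixed by the left multiplication action of $G(H^*)$ on $\widehat{H}$, and together with the trivial character and any $G(H^*)$-fixed degree-$pq$ character they span a $*$-invariant standard subalgebra $A \subsetneq R(H)$. Under the Nichols correspondence, $A$ yields a proper non-trivial Hopf subalgebra of $H^*$; combined with $kG(H) \subseteq H$ via the transpose construction as in Lemma \ref{lem8}, this should deliver the required normal Hopf subalgebra.

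The delicate residual situation is $|G(H)| = 1$, where $H$ has no group-likes to serve as a seed. The intended fallback is Corollary \ref{cor1}: as soon as $G(D(H)^*) \neq \{1\}$, the coprimality hypothesis forces a non-trivial central group-like of $H$ or of $H^*$, hence the desired normal Hopf subalgebra and semisolvability via Proposition \ref{prop1}. Establishing $G(D(H)^*) \neq \{1\}$ in this regime --- or alternatively ruling out $|G(H)| = 1$ altogether by combining Equation (3.1) with the $G(H^*)$-orbit counts on $\widehat{H}$ and the constraint $p^2 < q$ --- is where the main technical effort is concentrated.
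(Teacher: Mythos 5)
Your first case ($p \mid |G(H)|$, hence $\gcd(|G(H)|,|G(H^*)|)=p$ and Lemma \ref{lem3} gives the biproduct) and your treatment of $|G(H)|=q^3$ via Lemma \ref{lem4} both match or are equivalent to the paper. But for $|G(H)|\in\{1,q,q^2\}$ there is a genuine gap, and you have in effect conceded it: the paper's whole argument in the coprime regime rests on the fact that $G(D(H)^*)$ is \emph{always} non-trivial here, which it imports as a black box from \cite[Lemma 2.2]{dong} (the same citation that gives Frobenius type and $|G(H^*)|\neq 1$ in Section \ref{sec3}). With that input, Corollary \ref{cor1} immediately yields a non-trivial central group-like in $H$ or $H^*$, hence non-simplicity, hence semisolvability by Proposition \ref{prop1}, uniformly for all cases with $\gcd(|G(H)|,|G(H^*)|)=1$. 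You correctly identify Corollary \ref{cor1} as the right tool but leave ``establishing $G(D(H)^*)\neq\{1\}$'' as the unresolved technical core --- that is precisely the missing step, and it is not something one can recover from Equation (3.1) and orbit counts alone. You also miss that the residual cases shrink by duality: \cite[Lemma 2.2]{dong} applied to $H^*$ rules out $|G(H)|=1$, and Lemma \ref{lem5} applied to $H^*$ rules out $|G(H)|=q$, so only $q^2$ and $q^3$ actually survive, as in the paper's case split.

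Your character-theoretic substitute for the cases $|G(H)|\in\{q,q^2\}$ does not close either. The computation forcing $|G[\chi]|=p$ for $\deg\chi=p$ is fine, and $G(H^*)\cup X_p$ does span a $*$-invariant standard subalgebra (you should drop the degree-$pq$ characters: including them destroys closure under products, since a product of two degree-$pq$ characters has degree $p^2q^2$ and is unconstrained). But two steps fail. First, properness of this subalgebra is unjustified: Equation (3.1) reads $pq^3=p+p^2a+q^2b+p^2q^2c$, and $b=c=0$ with $1+pa=q^3$ is not excluded by the constraints you invoke, in which case your $A$ is all of $R(H)$ and produces no proper Hopf subalgebra. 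Second, even when the corresponding Hopf subalgebra $K\subseteq H^*$ is proper (dimension $pq^2$, since $q^2\mid 1+pa$), saying this ``should deliver the required normal Hopf subalgebra \ldots as in Lemma \ref{lem8}'' is a hand-wave: the proof of Lemma \ref{lem8} identifies the $q$-dimensional coinvariant subalgebra with a group algebra only because Lemma \ref{lem6} (available there since $|G(H^*)|=q^2$) restricts the dimensions of irreducible coideals to $1,q,pq$; in your setting the relevant coideals live in $H$ and their dimensions are governed by the character degrees of $H^*$, about which you have established nothing --- a decomposition $q=1+pk$ is not excluded, so normality does not follow. In short: the biproduct half is right, but the semisolvable half needs the non-triviality of $G(D(H)^*)$ from \cite{dong}, and your proposed workaround does not substitute for it.
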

\begin{proof}
Notice that $G(D(H)^*)$ is not trivial by \cite[Lemma 2.2]{dong}. If
$|G(H)|=q^2$ or $q^3$ then $H$ is not simple by Corollary
\ref{cor1}. Hence, $H$ is semisolvable. If $|G(H)|=p,pq$ or $pq^2$
then, by Lemma \ref{lem3}, $H$ is isomorphic to a Radford's
biproduct $H\cong R\#kG$, where $kG$ is the group algebra of $G$ of
order $p$, $R$ is a semisimple Yetter-Drinfeld Hopf algebra in
${}^{kG}_{kG}\mathcal{YD}$ of dimension $q^3$.
\end{proof}

\section{Semisimple Hopf algebras of dimension $2q^3$}\label{sec4}
In this section, $H$ will be a non-trivial semisimple Hopf algebra
of dimension $2q^3$, where $q$ is a prime number. We shall discuss
the structure of $H$. When $q=2$, the structure of $H$ is given in
\cite{Kashina}. When $q=3$, the structure of $H$ is given in
\cite[Chapter 12]{Natale4}. Therefore, in the rest of this section,
we always assume that $q>3$.

Using notations from Section \ref{sec3}, we have
$$2q^3=|G(H^*)|+4a+q^2b+4q^2c.\eqno(4.1)$$
By the results in Section \ref{sec3}, it suffices to consider the
cases that $|G(H^*)|=2q,2q^2$ and $q^2$.

\begin{lem}\label{lem10}
If $|G(H^*)|=2q$ or $2q^2$ then $H$ is either semisolvable, or
isomorphic to a Radford's biproduct $H\cong R\#kG$, where $kG$ is
the group algebra of $G$ of order $2$, $R$ is a semisimple
Yetter-Drinfeld Hopf algebra in ${}^{kG}_{kG}\mathcal{YD}$ of
dimension $q^3$.
\end{lem}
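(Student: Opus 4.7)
The plan is to parallel the proof of Lemma~\ref{lem9}, now in the setting $p=2$ with $|G(H^*)|\in\{2q,2q^2\}$, by performing a case analysis on $|G(H)|$. Since $|G(H)|$ divides $2q^3$ and, applying Lemma~\ref{lem5} to $H^*$, cannot equal $q$, the possibilities are $|G(H)|\in\{1,2,2q,q^2,2q^2,q^3\}$. Throughout I shall use that $G(D(H)^*)$ is non-trivial by \cite[Lemma 2.2]{dong}.

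Three cases resolve directly. If $|G(H)|=q^3$, then $kG(H)\subseteq H$ is a Hopf subalgebra of dimension $q^3$, and Lemma~\ref{lem4} yields lower semisolvability. If $\gcd(|G(H)|,|G(H^*)|)=1$, which forces $|G(H)|=1$ here (since $|G(H^*)|$ is even and divisible by $q$), then Corollary~\ref{cor1} produces a non-trivial central group-like in $H$ or $H^*$, so $H$ is not simple and Proposition~\ref{prop1} gives semisolvability. If $\gcd(|G(H)|,|G(H^*)|)=2$, equivalently $|G(H)|=2$, then Lemma~\ref{lem3} directly yields the Radford biproduct $H\cong R\#kG$ with $|G|=2$.

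The remaining values $|G(H)|\in\{2q,q^2,2q^2\}$ are the substantive content of the lemma: here $q$ divides both $|G(H)|$ and $|G(H^*)|$, so neither Corollary~\ref{cor1} nor Lemma~\ref{lem3} applies directly. I would exploit the Hopf quotient $\pi:H\to (kG(H^*))^*$ of dimension $|G(H^*)|$ obtained by dualising the inclusion $kG(H^*)\hookrightarrow H^*$, whose coinvariant subspace $H^{co\pi}$ has dimension $q^2$ or $q$. The target is to show that $H^{co\pi}$ is itself a Hopf subalgebra, equivalently that $kG(H^*)$ is a normal Hopf subalgebra of $H^*$; once this is achieved, $H$ is not simple and Proposition~\ref{prop1} concludes. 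To reach this, I would analyse the group homomorphism $G(H)\to G((kG(H^*))^*)$, use Lemma~\ref{lem2} to constrain how $kG(H)$ sits inside $H^{co\pi}$, and combine this with the decomposition~(2.1) applied to irreducible characters of degree $2$ and $q$ (whose stabilisers $G[\chi]$ are severely restricted by Lemma~\ref{lem1}) together with the arithmetic constraints from equation~(4.1). The main obstacle is precisely this normality step; the hypothesis $q>3$ must enter substantively here, whereas every other case is a mechanical application of tools already developed in Sections~\ref{sec2}--\ref{sec3}.
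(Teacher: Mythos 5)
Your dispatch of the peripheral cases matches the paper: $|G(H)|=q^3$ is settled by Lemma~\ref{lem4} (the paper quotes Lemma~\ref{lem7} for the dual, which rests on the same lemma), $|G(H)|=2$ by Lemma~\ref{lem3} (respectively Lemma~\ref{lem9} dualized), and $|G(H)|=1$ is in fact already excluded by \cite[Lemma 2.2]{dong} applied to $H^*$. But everything you say about the remaining cases is a plan, not a proof, and the plan is missing the two ingredients that actually carry the paper's argument in the hard case $|G(H^*)|=2q$, $|G(H)|=q^2$. The paper first deduces $a\neq 0$ from equation (4.1) (reduce mod $q^2$); then, since $q>3$ forces $4<q$, a product of two degree-$2$ characters (degree $4$) can contain no constituents of degree $q$ or $2q$, so by (2.1) and Lemma~\ref{lem1} the set $G(H^*)\cup X_2$ spans a $*$-invariant standard subalgebra of $R(H)$, which by \cite[Theorem 6]{Nichols} corresponds to a quotient Hopf algebra $K$ of dimension $2q+4a$. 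Nichols--Zoeller then forces ${\rm dim}K\in\{2q^2,2q^3\}$ (it is even and exceeds $2q$). If ${\rm dim}K=2q^2$ the dual of Lemma~\ref{lem8} finishes; if ${\rm dim}K=2q^3$ then $H$ is of type $(1,2q;2,a)$ as an algebra, and the paper imports \cite[Theorem 6.4]{Bichon} --- the classification of semisimple Hopf algebras all of whose irreducible characters have degree at most $2$, as deformations of binary polyhedral groups --- to produce a non-trivial central group-like in $H$ or $H^*$, whence Proposition~\ref{prop1} applies. Neither the standard-subalgebra construction nor the Bichon--Natale theorem appears anywhere in your sketch, and the latter is not replaceable by the elementary tools you list.

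Moreover, your stated target --- proving $kG(H^*)$ normal in $H^*$ via a coideal decomposition of $H^{co\pi}$ --- is achievable only in the easier half. When $|G(H^*)|=2q^2$ one has ${\rm dim}H^{co\pi}=q$, and since every irreducible left coideal of $H$ has dimension $1$, $q$ or $2q$ (Lemma~\ref{lem6} dualized, using $|G(H)|=q^2$), a count forces $H^{co\pi}=k\langle g\rangle$ with $g$ of order $q$; this is precisely the dual of Lemma~\ref{lem8}, and it is how the paper settles that case. But when $|G(H^*)|=2q$ one has ${\rm dim}H^{co\pi}=q^2$, which is compatible with $H^{co\pi}=k\langle g\rangle\oplus(\hbox{sum of }q\hbox{- and }2q\hbox{-dimensional coideals})$, and no counting with Lemma~\ref{lem2}, (2.1) and (4.1) rules this out. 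Worse, in the type $(1,2q;2,a)$ branch the central group-like supplied by \cite[Theorem 6.4]{Bichon} may lie in $G(H)$ rather than $G(H^*)$, so normality of $kG(H^*)$ is the wrong invariant to chase: the conclusion one can hope for is only that $H$ is non-simple. Your guess that ``$q>3$ must enter substantively'' in a deep normality step is also off --- it enters only through the elementary estimate $4<q$ used to close up $G(H^*)\cup X_2$. Finally, note that your residual cases $|G(H)|\in\{2q,2q^2\}$ are not treated separately in the paper either (its written case analysis covers only $|G(H)|=q^2$ beyond the gcd reduction, tacitly relying on Lemma~\ref{lem3}, i.e.\ \cite[Lemma 4.1.9]{Natale4}); flagging them as ``substantive'' without any resolution leaves your proposal incomplete there as well.
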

\begin{proof}
By Lemma \ref{lem7} and \ref{lem9}, we may assume that $|G(H)|\neq
2$ and $q^3$.

First, if $gcd(|G(H)|,|G(H^*)|)=2$ then the lemma follows from Lemma
\ref{lem3}.

Second, we assume that $|G(H^*)|=2q$ and $|G(H)|=q^2$. Notice that
$a\neq0$, otherwise equation (4.1) will give rise to a
contradiction. Hence, $G(H^*)\cup X_2$ spans a standard subalgebra
of $R(H)$. It follows that $H$ has a quotient Hopf algebra $K$ of
dimension $2q+4a$. By Nichols-Zoeller Theorem \cite{Nichols2}, ${\rm
dim}K=2q^2$ or $2q^3$. If ${\rm dim}K=2q^2$ then the duality of
Lemma \ref{lem8} proves the lemma. If ${\rm dim}K=2q^3$ then $H$ is
of type $(1,2q;2,a)$ as an algebra. Then \cite[Theorem 6.4]{Bichon}
shows that either $H$ or $H^*$ must contain a non-trivial central
group-like element. The lemma then follows from Proposition
\ref{prop1}.

Finally, we assume that $|G(H^*)|=2q^2$ and $|G(H)|=q^2$. In this
case, the duality of Lemma \ref{lem8} proves the lemma.
\end{proof}

\begin{lem}\label{lem11}
If $|G(H^*)|=q^2$ then $H$ is semisolvable.
\end{lem}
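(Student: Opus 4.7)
The plan is a case analysis on $|G(H)|$. By Nichols-Zoeller, $|G(H)|$ divides $2q^3$. Applying \cite[Lemma 2.2]{dong} to $H^*$ (itself a non-trivial semisimple Hopf algebra of dimension $2q^3$) rules out $|G(H)|=1$, while Lemma \ref{lem5} applied to $H^*$ (valid since $q>3$ forces $2^2<q$) rules out $|G(H)|=q$. The case $|G(H)|=2q^3$ would make $H$ a group algebra, contradicting non-triviality. Hence $|G(H)|\in\{2,\,2q,\,q^2,\,q^3,\,2q^2\}$.

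I then dispatch the easy cases. If $|G(H)|=q^3$, Lemma \ref{lem4} applies directly. If $|G(H)|=2q^2$, then $k[G(H)]$ is a Hopf subalgebra of dimension $2q^2$ and Lemma \ref{lem8} applies. If $|G(H)|=2q$, I apply Lemma \ref{lem10} to $H^*$: either $H^*$ is already semisolvable (giving the same for $H$), or $H^*\cong R\#kG$ is a Radford biproduct with $|G|=2$; in the latter case the Hopf subalgebra $kG\subseteq H^*$ embeds an element of order $2$ into $G(H^*)$, contradicting the oddness of $|G(H^*)|=q^2$. If $|G(H)|=2$, then $\gcd(|G(H)|,|G(H^*)|)=1$ and, using the non-triviality of $G(D(H)^*)$ from \cite[Lemma 2.2]{dong}, Corollary \ref{cor1} produces a non-trivial central group-like in $H$ or $H^*$; Proposition \ref{prop1} then yields semisolvability.

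The remaining case $|G(H)|=q^2$ is where none of the preceding lemmas apply directly, and it is the main obstacle. My plan is to construct a $*$-invariant standard subalgebra of $R(H)$ of dimension $2q^2$; dually this is a Hopf subalgebra of $H^*$ of dimension $2q^2$, so Lemma \ref{lem8} applied to $H^*$ (using $|G((H^*)^*)|=|G(H)|=q^2$) will deliver lower semisolvability for $H^*$, hence semisolvability for $H$. To build this subalgebra I would exploit the $G(H^*)$-action on $X_q$ by left multiplication: the decomposition (2.1) combined with degree counting forces $|G[\chi]|\in\{q,\,q^2\}$ for every $\chi\in X_q$, so orbits have size $1$ or $q$. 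Since $b+4c=2q-1$ is odd, $b$ is odd, so the number of self-dual characters in $X_q$ is odd and in particular at least one exists.

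The hardest step is to pin down a self-dual $\chi\in X_q$ with $G[\chi]=G(H^*)$. Given such a $\chi$, the span $\mathrm{span}_k(G(H^*)\cup\{\chi\})$ is a $*$-invariant standard subalgebra of dimension $2q^2$: the product $\chi^2=\chi\chi^*$ equals $\sum_{g\in G(H^*)} g$ (the higher-degree part of (2.1) has degree zero), while self-duality together with the identity $G_R[\chi]=G[\chi^*]$ ensures closure under both left and right multiplication by $G(H^*)$. The parity-and-orbit argument needed to locate this fixed self-dual character, combining $b\equiv 1\pmod 4$ with the interaction between the antipodal involution $*$ and the $G(H^*)$-action, is the most delicate point of the proof.
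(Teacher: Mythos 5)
Your reduction to the case $|G(H)|=q^2$ is correct and is essentially what the paper compresses into the sentence ``by the discussion above, we may assume that $|G(H)|=q^2$'' (your uses of Lemma \ref{lem5}, Lemma \ref{lem9}/Corollary \ref{cor1}, and Lemma \ref{lem10} applied to $H^*$ all check out). Your endgame would also be sound \emph{if} its input existed: given a self-dual $\chi\in X_q$ with $G[\chi]=G(H^*)$, the span of $G(H^*)\cup\{\chi\}$ is indeed a $*$-invariant standard subalgebra of dimension $2q^2$, yielding a Hopf subalgebra of $H^*$ of that dimension, and Lemma \ref{lem8} applied to $H^*$ finishes. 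But the existence of such a $\chi$ is exactly what you never prove, and the counting you marshal cannot deliver it. Writing $f$ for the number of $\chi\in X_q$ with $G[\chi]=G(H^*)$, the orbit decomposition gives $b=f+qm$; since $q$ is odd, the constraints ``$b$ odd'' and $b\equiv 1\pmod 4$ are perfectly compatible with $f=0$ (for instance $q=5$, $c=1$ forces $b=5$, consistent with $f=0$, $m=1$). The parity argument does produce a $*$-fixed character, but says nothing about its stabilizer; and the hoped-for interaction between $*$ and the $G(H^*)$-action breaks down because $(g\chi)^*=\chi^* g^{-1}$: the involution carries left orbits to \emph{right} orbits, so it does not act on the set of left orbits and no fixed-point count of the kind you sketch is available. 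The crux of your proof is therefore an unproved conjecture, not a delicate-but-doable step.

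For contrast, the paper's argument in this case avoids character counting altogether and works inside $H$. Transposing the inclusion $kG(H^*)\subseteq H^*$ gives $\pi:H\to (kG(H^*))^*$ with $\dim H^{co\pi}=2q$. By Lemma \ref{lem6} the irreducible left coideals of $H$ have dimension $1,q$ or $2q$, and by Lemma \ref{lem2} the group-likes lying in $H^{co\pi}$ span a Hopf subalgebra whose dimension divides $2q$ (and, being a subgroup of $G(H)$, divides $q^2$); dimension bookkeeping then forces $H^{co\pi}=k\langle g\rangle\oplus V$ with $g$ of order $q$ and $V$ an irreducible coideal of dimension $q$. Since $gV$ and $Vg$ lie in $H^{co\pi}$, one gets $gV=V=Vg$, so \cite[Corollary 3.5.2]{Natale4} makes $k\langle g\rangle$ normal in $k[C]$, where $C$ is the simple subcoalgebra containing $V$; Nichols--Zoeller leaves $\dim k[C]\in\{2q^2,\,q^3,\,2q^3\}$, and the three cases are closed by Lemma \ref{lem8}, Lemma \ref{lem4}, and Proposition \ref{prop1} respectively. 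If you want to salvage your quotient-Hopf-algebra strategy, you would need a structural (not numerical) reason for a fully $G(H^*)$-stable self-dual degree-$q$ character to exist, and none is in sight; as written, the proposal fails at its central step.
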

\begin{proof}
By the discussion above, we may assume that $|G(H)|=q^2$.
Considering the map $\pi:H\to (kG(H^*))^*$ obtained by transposing
the inclusion $kG(H^*)\subseteq H^*$, we have ${\rm
dim}H^{co\pi}=2q$. By Lemma \ref{lem6}, the dimension of every
irreducible left coideal of $H$ is $1,q$ or $2q$. Therefore, by
Lemma \ref{lem2}, as a left coideal of $H$, $H^{co\pi}$ decomposes
in the form $H^{co\pi}=k\langle g\rangle\oplus V$, where $k\langle
g\rangle$ is the group algebra of a subgroup $\langle g\rangle$ of
$G(H)$ generated by $g$ which is of order $q$, and $V$ is an
irreducible left coideal of $H$ of dimension $q$. Since $gV$ and
$Vg$ are irreducible left coideals of $H$ isomorphic to $V$ , and
$gV, Vg$ are contained in $H^{co\pi}$, we have $gV=V=Vg$. Then
\cite[Corollary 3.5.2]{Natale4} shows that $k\langle g\rangle$ is a
normal Hopf subalgebra of $k[C]$, where $C$ is the simple
subcoalgebra of $H$ containing $V$, and $k[C]$ is a Hopf subalgebra
of $H$ generated by $C$ as an algebra. Clearly, ${\rm dim}k[C]\geq
q+q^2$. Moreover, by Nichols-Zoeller Theorem \cite{Nichols2}, ${\rm
dim}k[C]=2q^3,q^3$ or $2q^2$. If ${\rm dim}k[C]=2q^3$ then $k[C]=H$
and $k\langle g\rangle$ is a normal Hopf subalgebra of $H$. The
lemma then follows from Proposition \ref{prop1}. If ${\rm
dim}k[C]=q^3$ then the lemma follows from Lemma \ref{lem4}. If ${\rm
dim}k[C]=2q^2$ then the lemma follows from Lemma \ref{lem8}.
\end{proof}

From the discussion in Section \ref{sec3} and \ref{sec4}, we obtain
the main theorem.
\begin{thm}\label{thm2}
Let $H$ will be a non-trivial semisimple Hopf algebra of dimension
$2q^3$, where $q>3$ is a prime number. Then

(1)\, If $gcd(|G(H)|,|G(H^*)|)=2$ then $H$ is isomorphic to a
Radford's biproduct $H\cong R\#kG$, where $kG$ is the group algebra
of $G$ of order $2$, $R$ is a semisimple Yetter-Drinfeld Hopf
algebra in ${}^{kG}_{kG}\mathcal{YD}$ of dimension $q^3$.

(2)\, In all other cases, $H$ is semisolvable.
\end{thm}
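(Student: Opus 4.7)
The plan is to assemble the theorem as a direct consequence of the case analysis already carried out in Sections \ref{sec3} and \ref{sec4}. Since $q>3$, the hypothesis $p^2<q$ of Section \ref{sec3} is satisfied for $p=2$, so every lemma of that section applies here. The proof proceeds by a case distinction on $|G(H^*)|$, together with Lemma \ref{lem3} to pin down precisely when the biproduct alternative occurs.

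First I would enumerate the possible values of $|G(H^*)|$. By Nichols--Zoeller, $|G(H^*)|$ divides $2q^3$. Since $H$ is non-trivial, $|G(H^*)| \neq 2q^3$; by \cite[Lemma 2.2]{dong}, $|G(H^*)| \neq 1$; and by Lemma \ref{lem5}, $|G(H^*)| \neq q$. This narrows the list to the five values $|G(H^*)| \in \{2,\, 2q,\, q^2,\, 2q^2,\, q^3\}$.

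The cases $|G(H^*)| = q^3$ and $|G(H^*)| = q^2$ are disposed of directly by Lemmas \ref{lem7} and \ref{lem11}, each giving semisolvability. In both of these cases $|G(H^*)|$ is odd, so $\gcd(|G(H)|,|G(H^*)|)$ is odd and in particular $\neq 2$, placing us in part (2) of the theorem. For the remaining orders $|G(H^*)| \in \{2,\,2q,\,2q^2\}$, Lemma \ref{lem9} (with $p=2$) and Lemma \ref{lem10} furnish a dichotomy: either $H$ is semisolvable, or $H\cong R\#kG$ with $|G|=2$ and $R$ a semisimple Yetter--Drinfeld Hopf algebra in ${}^{kG}_{kG}\mathcal{YD}$ of dimension $q^3$. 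I would then argue that the biproduct option occurs exactly when $\gcd(|G(H)|,|G(H^*)|)=2$: on one side, Lemma \ref{lem3} shows that $\gcd=2$ forces $H\cong R\#kG$; on the other, inspection of the proofs of Lemmas \ref{lem9} and \ref{lem10} reveals that the biproduct conclusion is invoked there only through Lemma \ref{lem3}, hence only when $\gcd=2$. This establishes part (1), while every remaining subcase of the dichotomy yields semisolvability, establishing part (2).

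The main obstacle I anticipate is not a computation but a bookkeeping one: I must make sure that every admissible pair $(|G(H)|,|G(H^*)|)$ is actually covered by one of the cited lemmas, and that the dichotomy cleanly lines up with the $\gcd=2$ versus $\gcd\neq 2$ split claimed in the theorem. I would verify this by tabulating the permitted values of $|G(H)|$ against those of $|G(H^*)|$, noting in each cell which of Lemmas \ref{lem7}, \ref{lem9}, \ref{lem10}, \ref{lem11} applies and whether the $\gcd$ equals $2$, so that the two conclusions of the theorem exhaust all cases without overlap.
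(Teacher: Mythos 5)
Your overall strategy coincides with the paper's: Theorem \ref{thm2} is given there with no independent argument beyond ``from the discussion in Sections \ref{sec3} and \ref{sec4}'', and your enumeration $|G(H^*)|\in\{2,\,2q,\,q^2,\,2q^2,\,q^3\}$ together with the appeals to Lemmas \ref{lem7}, \ref{lem9}, \ref{lem10}, \ref{lem11}, and to Lemma \ref{lem3} for part (1), is exactly that assembly. Part (1) is indeed immediate from Lemma \ref{lem3} with $p=2$, and your handling of $|G(H^*)|\in\{2,q^2,q^3\}$ is fine.

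However, the bookkeeping step on which you explicitly stake the proof --- that the tabulation of pairs $(|G(H)|,|G(H^*)|)$ ``cleanly lines up'' with the $\gcd=2$ versus $\gcd\neq2$ split --- does not close, and this is a genuine gap (one inherited from the paper itself). Consider the four cells with $|G(H)|\in\{2q,2q^2\}$ \emph{and} $|G(H^*)|\in\{2q,2q^2\}$; these are not excluded by equation (4.1), and there $\gcd(|G(H)|,|G(H^*)|)\in\{2q,2q^2\}\neq 2$, so part (2) demands semisolvability. Your proposed inspection of the proof of Lemma \ref{lem10} yields nothing in these cells: the opening reduction via Lemmas \ref{lem7} and \ref{lem9} only removes $|G(H)|=2$ and $q^3$; the ``First'' case needs $\gcd$ \emph{exactly} $2$, so Lemma \ref{lem3} as stated is inapplicable when $\gcd=2q$ or $2q^2$; and the ``Second'' and ``Finally'' cases both assume $|G(H)|=q^2$. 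Thus the written proof of Lemma \ref{lem10} simply does not treat these cells, so neither semisolvability nor even the lemma's own dichotomy is available there, and your claim that ``every remaining subcase of the dichotomy yields semisolvability'' is unsupported precisely where it matters. Worse, these cells cannot be resolved in favor of part (2) by strengthening the biproduct input: since $4\nmid 2q^3$, if $g\in G(H)$ and $\eta\in G(H^*)$ have order $2$ and $\langle\eta,g\rangle=1$, then $k\langle g\rangle\subseteq H^{co\pi}$ for the projection $\pi$ dual to $k\langle\eta\rangle\subseteq H^*$, contradicting Lemma \ref{lem2} because $\dim H^{co\pi}=q^3$ is odd; hence $2\mid\gcd$ already forces $H\cong R\#k\mathbb{Z}_2$ (this is the full strength of \cite[Lemma 4.1.9]{Natale4}). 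So in those four cells $H$ is a biproduct, and to rescue the theorem one must either prove semisolvability there directly or restate part (1) with the hypothesis ``$\gcd(|G(H)|,|G(H^*)|)$ even'' rather than ``$=2$''. Your proposal, by deferring entirely to the cited lemmas and asserting the table closes, would fail at exactly this point.
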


As an immediate consequence of Theorem \ref{thm2}, we
 have the following result.
\begin{cor}
If $H$ is simple as a Hopf algebra then $H$ is isomorphic to a
Radford's biproduct $H\cong R\#kG$, where $kG$ is the group algebra
of $G$ of order $2$, $R$ is a semisimple Yetter-Drinfeld Hopf
algebra in ${}^{kG}_{kG}\mathcal{YD}$ of dimension $q^3$.
\end{cor}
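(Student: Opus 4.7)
The plan is to derive the corollary directly from Theorem \ref{thm2} by showing that the alternative ``semisolvable'' conclusion is incompatible with Hopf-algebra simplicity in this dimension. Concretely, Theorem \ref{thm2} gives a dichotomy: either $\gcd(|G(H)|,|G(H^*)|)=2$, in which case $H$ already has the desired biproduct form $R\# kG$ with $|G|=2$ and $\dim R = q^3$, or $H$ is semisolvable. So the only thing to check is that a non-trivial simple semisimple Hopf algebra of dimension $2q^3$ (with $q>3$) cannot be semisolvable, which will force us into case (1) of the theorem.

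To eliminate the semisolvable case, I would unwind the definition from Subsection \ref{sec2-2}. If $H$ is lower semisolvable via $k=H_{n+1}\subseteq H_n\subseteq\cdots\subseteq H_1=H$, then each $H_{i+1}$ is normal in $H_i$. Simplicity of $H$ means $H$ has no proper non-trivial normal Hopf subalgebras, so the only option is $H_2=k$, and then $H/Hk^+\cong H$ must be trivial, i.e., $H$ is isomorphic to a group algebra $kG$ or to a dual group algebra $(kG)^*$ for some group $G$ of order $2q^3$. The dual, upper-semisolvable, case reduces similarly via $H^*$, using that simplicity is self-dual. Hence semisolvability plus simplicity forces $H$ to be trivial.

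The final step is to exclude these trivial cases in dimension $2q^3$ with $q>3$ prime. By Sylow's theorems, any group $G$ of order $2q^3$ has a (unique, hence) normal Sylow $q$-subgroup $P$: the number of Sylow $q$-subgroups divides $2$ and is $\equiv 1\pmod q$, which for $q>2$ forces it to equal $1$. Then $kP$ is a proper normal Hopf subalgebra of $kG$, so $kG$ is not simple as a Hopf algebra; dually, $(kG)^*$ is not simple either. This contradicts the hypothesis that $H$ is simple, so semisolvability is impossible. We conclude $\gcd(|G(H)|,|G(H^*)|)=2$, and Theorem \ref{thm2}(1) delivers the required biproduct decomposition $H\cong R\# kG$ with $|G|=2$ and $\dim R=q^3$.

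There is essentially no hard step here, since all the substantial work is already packaged in Theorem \ref{thm2}; the only mildly delicate point is the definitional unwinding of semisolvability under the simplicity hypothesis, together with the elementary Sylow argument that rules out simple Hopf algebras isomorphic to $kG$ or $(kG)^*$ in this dimension range. One should just make sure to flag the standing assumption $q>3$ from Section \ref{sec4} so the cases $q=2$ and $q=3$, handled elsewhere, are correctly excluded.
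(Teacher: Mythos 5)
Your proposal is correct and matches the paper's (unwritten) argument: the corollary is stated as an immediate consequence of Theorem \ref{thm2}, precisely because simplicity rules out the semisolvable alternative, forcing case (1). Note only that your Sylow detour is unnecessary: once the definitional unwinding shows a simple semisolvable $H$ would have to be a group algebra or its dual, i.e.\ trivial, this already contradicts the standing hypothesis of Section \ref{sec4} (and of Theorem \ref{thm2}) that $H$ is \emph{non-trivial}, so no group-theoretic input is needed.
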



\begin{thebibliography}{00}
\bibitem{Bichon}J. Bichon, S. Natale, Hopf algebra deformations of binary polyhedral
groups. Trans. Groups, DOI: 10.1007/s00031-011-9133-x.

\bibitem{dong}J. Dong, Structure of semisimple Hopf algebras of dimension
$p^2q^2$. arXiv:1009.3541v2, to appear in Communications in Algebra.



\bibitem{Etingof}P. Etingof and S. Gelaki, Semisimple Hopf algebras of dimension $pq$ are
trivial. J. Algebra 210(2),  664--669 (1998).

\bibitem{Etingof2}P. Etingof, D. Nikshych and V. Ostrik, Weakly group-theoretical and solvable fusion
categories. Adv. Math. 226 (1), 176--505 (2011).


\bibitem{Kaplansky}I. Kaplansky, Bialgebras. Chicago, University of Chicago
Press 1975.

\bibitem{Kashina}Y. Kashina , Classification of semisimple Hopf algebras of dimension $16$,
J. Algebra 232, 617--663(2000).

\bibitem{Kobayashi}T. Kobayashi and A. Masuoka, A result extended from groups to Hopf
algebras. Tsukuba J. Math. 21(1), 55--58 (1997).


\bibitem{Masuoka1}A. Masuoka, The $p^n$ theorem for semisimple Hopf
algebras. Proc. Amer. Math. Soc. 124, 735--737 (1996).

\bibitem{Masuoka2}A. Masuoka, Self-dual Hopf algebras of dimension $p^3$ obtained by
extension. J. Algebra 178, 791-806 (1995)


\bibitem{Montgomery}S. Montgomery and S. Whiterspoon, Irreducible representations of
crossed products. J. Pure Appl. Algebra 129, 315--326 (1998).


\bibitem{Montgomery2}S. Montgomery, Hopf algebras and their actions on rings. CBMS Reg.
Conf. Ser. Math. 82. Providence. Amer. Math. Soc. 1993.

\bibitem{Natale3}S. Natale, On semisimple Hopf algebras of dimension $pq^r$. Algebras Represent.
Theory 7 (2), 173--188 (2004).

\bibitem{Natale4}S. Natale, Semisolvability of semisimple Hopf algebras of low dimension. Mem. Amer. Math. Soc.
186 (2007).

\bibitem{Natale1}S. Natale, On semisimple Hopf algebras of dimension $pq^2$. J. Algebra
221(2), 242--278 (1999).

\bibitem{Nichols}W. D. Nichols and M. B. Richmond, The Grothendieck group of a Hopf
algebra. J. Pure Appl. Algebra 106, 297--306(1996).

\bibitem{Nichols2}W. D. Nichols and M. B. Zoelle, A Hopf algebra freeness theorem. Amer.
J. Math. 111(2), 381--385 (1989).


\bibitem{Radford}D. Radford, The structure of Hopf algebras with a
projection. J. Algebra 92, 322--347 (1985).

\bibitem{Radford2}D. Radford, Minimal Quasitriangular Hopf algebras. J. Algebra, 157, 285--315 (1993).


\bibitem{Schneider}H.-J. Schneider, Normal basis and
transitivity of crossed products for Hopf algebras. J. Algebra 152,
289--312 (1992).

\bibitem{Somm}Y. Sommerh\"{a}user, Yetter-Drinfel¡¯d Hopf algebras over groups of
prime order, Lectures Notes in Math. 1789, Springer-Verlag (2002).

\bibitem{Sweedler}M. E. Sweedler, Hopf Algebras. New
York, Benjamin 1969.


\bibitem{Zhu}Y. Zhu, Hopf algebras of prime dimension. Internat. Math. Res. Notices
1, 53--59 (1994).
\end{thebibliography}
\end{document}